\renewcommand{\subsection}[1]{
\refstepcounter{subsection}\noindent{\bf \thesubsection.} }
\newcommand{\np}[1]{
\refstepcounter{subsection}\noindent{\bf \thesubsection.} }
\numberwithin{equation}{section}
\renewcommand{\geq}{\geqslant}
\renewcommand{\leq}{\leqslant}
\newcommand{\idlim}{\varprojlim} 
\newcommand{\E}{\mathrm{E}}                        
\newcommand{\Span}{\operatorname{span}}                
\newcommand{\End}{\operatorname{End}} 
\newcommand{\CC}{\mathbb{C}} 
\newcommand{\ZZ}{\mathbb{Z}} 
\newtheorem{theorem}{Theorem}[section]
\newtheorem{proposition}[theorem]{Proposition}
\theoremstyle{definition}
\begin{document}

\title{Comments related to infinite wedge representations}

\author{Nathan Grieve}

\thanks{\emph{Mathematics Subject Classification (2010):} 05E05, 37K10.}
\thanks{This article has been published by Comptes Rendus Math\'{e}matiques de l'Acad\'{e}mie des Sciences. La
              Soci\'{e}t\'{e} Royale du Canada. Mathematical Reports of the Academy
              of Science. The Royal Society of Canada.  The final published version is [C. R. Math. Acad. Sci. Soc. R. Can. 39 (2017), no. 1, 13--35].}

\maketitle
\begin{abstract} 
We study the infinite wedge representation and show how it is related to the universal central extension of $g[t,t^{-1}]$, the loop algebra of a complex semi-simple Lie algebra $g$.  We also give an elementary proof of the boson-fermion correspondence.  Our approach to proving this result is based on a combinatorial construction combined with an application of the Murnaghan-Nakayama rule.
\end{abstract}

\section{Introduction}

In this article, we make two remarks about the \emph{infinite wedge representation}. To describe what we do let  $\mathbf{gl}(\infty)$ denote the Lie algebra of $\ZZ\times \ZZ$ \emph{band infinite matrices}.  Then $\mathbf{gl}(\infty)$ consists of those matrices $A =(a_{ij})_{i,j\in\ZZ}$ with $a_{ij}\in \CC$ and $a_{ij}=0$ for all $|i-j| \gg 0$.   Next
let $\widehat{\mathbf{gl}}(\infty)$ denote the Lie algebra determined by the $2$-cocycle $\mathrm{c}(\cdot,\cdot)$ of $\mathbf{gl}(\infty)$ with values in the trivial $\mathbf{gl}(\infty)$-module $\CC$:
$$\mathrm{c}(A,B):=  \sum_{i \leq 0, k>0} a_{ik}b_{ki} - \sum_{i>0,k\leq 0} a_{ik}b_{ki} \text{,} $$
see for instance \cite[p.~ 12]{Bloch:Okounkov:2000} or \cite[p.~ 115]{Kac:infinite:Lie:Algebras}.
The \emph{infinite wedge representation} is a suitably defined, see \S \ref{4} for precise details, Lie algebra representation 
$\rho : \widehat{\mathbf{gl}}(\infty) \rightarrow \End_\CC(F)\text{;}$
here $F$ is the \emph{infinite wedge space} that is the $\CC$-vector space determined by the set $\mathscr{S}$ which consists of those ordered strictly decreasing sequences of integers $S=(s_1,s_2,\dots)$, $s_i \in \ZZ$, with the properties that $s_i = s_{i-1}-1$ for all $i \gg 0$.

To describe our first theorem, let $g$ be a finite dimensional semi-simple complex Lie algebra,  $g[t,t^{-1}]$ its loop algebra and $\widehat{g}$ the universal (central) extension of $g[t,t^{-1}]$ in the sense of Garland \cite[\S 2]{Garland:1980} (see also \cite[\S 7.9]{Weibel}).  In Theorem \ref{universal:extension:theorem} we show how the representation  $\rho$ is related to $\widehat{g}$.

Our second theorem, Theorem \ref{bosonic:Extension:theorem}, gives an elementary proof of the \emph{boson-fermion correspondence}, in the sense of Kac-Raina-Rozhkovskaya \cite[Lecture 5, p. 46]{kac-raina-rozhkovskaya}.
To place this theorem in its proper context, let $\mathfrak{s}$ denote the \emph{oscillator algebra}, which is the universal extension of $\CC[t,t^{-1}]$ the loop algebra of the abelian Lie algebra $\CC$.  The Lie algebra $\mathfrak{s}$  is faithfully represented in $\End_\CC(F)$ and also in $\End_\CC(B)$, where $B$ denotes the polynomial ring in countably many variables with coefficients in the ring of Laurent polynomials.   The boson-fermion correspondence, as formulated in \cite[Lecture 5, p. 46]{kac-raina-rozhkovskaya} compare also with \cite[\S 14.9--14.10]{Kac:infinite:Lie:Algebras}, concerns extending these representations to all of $\widehat{\mathbf{gl}}(\infty)$ in such a way that an evident $\CC$-linear isomorphism $F\rightarrow B$ becomes an isomorphism of $\widehat{\mathbf{gl}}(\infty)$-modules; see \S \ref{6} for more precise details.  The traditional approach for proving this result is by way of vertex-operators, see \cite{Kac:infinite:Lie:Algebras} and \cite[Lecture 6, p.~ 46]{kac-raina-rozhkovskaya}.  The key point to our approach, which does not require the use of vertex-operators, is a combinatorial construction related to partitions, see \S \ref{fermi:gl}, together with the Murnaghan-Nakayama rule which we recall in  \S \ref{2.6}.

\noindent
{\bf Acknowledgements.}  This work benefitted from discussions with Jacques Hurtubise and John Harnad.  It was completed while I was a postdoctoral fellow at McGill University and also while I was a postdoctoral fellow at the University of New Brunswick where I was financially supported by an AARMS postdoctoral fellowship.

\section{Preliminaries}\label{2}

In this section, to fix notation and terminology for what follows, we recall a handful of combinatorial and Lie theoretic concepts.  For the most part we use  combinatorial terminology and conventions similar to that of \cite[I \S 1 --  5]{Macdonald:Sym:Func} and Lie theoretic terminology and conventions similar to that of \cite[\S 7 and \S 14]{Kac:infinite:Lie:Algebras}.

\np{}\label{2.1}   
Let $\mathscr{P}$ denote the set of partitions.  Then $\mathscr{P}$ consists of those infinite weakly decreasing sequences of non-negative integers of the form
$\lambda=(\lambda_1,\lambda_2,\dots)$ with the property that at most finitely many of the $\lambda_i$ are nonzero.  
If $\lambda=(\lambda_1,\lambda_2,\dots) \in \mathscr{P}$, then the number 
$\operatorname{weight}(\lambda):= \sum_{i=1}^{\infty}\lambda_i$ is called the \emph{weight of $\lambda$} and we denote by $\mathscr{P}_d$ the set of partitions of weight $d$.  If $\lambda=(\lambda_1,\lambda_2,\dots) \in \mathscr{P}$, then we often identify $\lambda$ with the finite weakly decreasing sequence $(\lambda_1,\dots, \lambda_r)$, where 
$r = \operatorname{length}(\lambda) := \max \{ i : \lambda_i \not = 0 \}.$

\np{}\label{2.2}
The \emph{Young diagram} of a partition $\lambda=(\lambda_1,\lambda_2,\dots) \in \mathscr{P}$ is defined to be the set of points $(i,j) \in \ZZ^2$ such that $1 \leq j \leq \lambda_i$.  When drawing the Young diagram associated to a partition we use the convention that the first coordinate is the row index, starts at $1$ and increases from left to right.  Similarly, the second coordinate is the column index, starts at $1$ and increases downward.  We refer to the  elements $(i,j)$ of a Young diagram as the \emph{boxes} of the associated partition and the entires $i$ and $j$ as the sides of the box.  

\np{}\label{2.3} If $\lambda, \mu \in \mathscr{P}$ and $\lambda \supseteq \mu$, so $\lambda_i \geq \mu_i$ for all $i \geq 1$, then the set theoretic difference of the Young diagrams corresponding to $\lambda$ and $\mu$ is denoted by  $\lambda \setminus \mu$ and is called a \emph{skew diagram}.
If $\lambda, \mu \in \mathscr{P}$ and $\lambda \supseteq \mu$, then let $\theta := \lambda \setminus \mu$ denote the skew diagram that they determine.  By a \emph{path} in  $\theta$, we mean a sequence $x_0,x_1,\dots,x_m$ with $x_i \in \theta$, such that $x_{i-1}$ and $x_i$ have a common side for $1\leq i\leq m$.  A subset $\nu \subseteq \theta$ is said to be \emph{connected} if every two boxes in $\nu$ can be connected by a path in $\nu$.   
The \emph{length} of $\theta$ is defined to be the number of boxes that appear in its diagram and is denoted by $\#\theta$.
We say that $\theta$ is a \emph{border strip} if it is connected and if it contains no $2 \times 2$ box.   Finally, if $\theta$ is a border strip, then we denote its \emph{height}  by $\operatorname{height}(\theta)$ and define it to be one less then the number of rows that it occupies.

\np{}\label{2.4}
The symmetric group $S_n$ acts on the polynomial ring $\CC[x_1,\dots,x_n]$ by permuting the variables and we let $\Lambda_n$ denote the subring of invariants.  We then have that 
$\Lambda_n = \bigoplus_{k\geq 0} \Lambda_n^k$ where $\Lambda_n^k\subseteq \Lambda_n$ is the subspace of symmetric polynomials of degree $k$.  If $k\in \ZZ_{\geq 0}$, $m,n\in \ZZ_{\geq 1}$ and $m\geq n$, we have evident restriction maps
$ \rho_{m,n}^k : \Lambda_m^k \rightarrow \Lambda_n^k\text{;}$ 
let 
$ \Lambda^k = \idlim \Lambda^k_n$
and 
$ \Lambda = \bigoplus_{k\geq 0} \Lambda^k\text{.}$
Then $\Lambda = \CC[h_1,h_2,\dots]$ where the $h_k$ are such that their image in $\Lambda^k_n$ is the $k$th complete symmetric function in the variables $x_1,\dots, x_n$.

\np{}\label{2.5}
Let $H(Z) := \sum_{k\geq 0} h_k Z^k \in \Lambda[[Z]]$ and define $p_k \in \Lambda$ by the coefficient of $Z^{k-1}$ in the power series
$P(Z) := H'(Z)/H(Z)\text{.} $
The image of each $p_k$ in $\Lambda^k_n$ is the $k$th power sum in the variables $x_1,\dots,x_n$ and the $h_k$ can be expressed in terms of the $p_k$ via the equality of power series
$H(Z) = \exp\left(\sum_{k\geq 1} t_k Z^k \right)\text{;} $
here $t_k = p_k/k$.  The Schur functions $s_\lambda$, defined for all partitions $\lambda = (\lambda_1,\lambda_2,\dots) \in \mathscr{P}$, are defined by 
$s_\lambda := \operatorname{det} \left( h_{\lambda_i - i + j}\right)_{1\leq i,j \leq n} \text{,}$
where $n = \operatorname{length}(\lambda)$, and form a $\CC$-basis for $\Lambda$.  In what follows we let $\langle \cdot , \cdot \rangle$ denote the symmetric bilinear form on $\Lambda$ for which the Schur polynomials are orthonormal.  In particular,
$ \langle s_\lambda, s_\mu \rangle = \delta_{\lambda,\mu} \text{.}$ 

\np{}\label{2.6}
By abuse of notation, we let $p_k \in \End_\CC(\Lambda)$ be the $\CC$-linear endomorphism given by multiplication by $p_k$.  The adjoint of $p_k$ with respect to $\langle \cdot, \cdot \rangle$, which we denote by $p_k^\perp$, is the $\CC$-linear endomorphism given by the differential operator $k \frac{\partial}{\partial p_k}$,  \cite[p.~ 76]{Macdonald:Sym:Func}.

The effect of the operator $p_k$ in the basis of Schur polynomials is given by the Murnaghan-Nakayama rule:
\begin{equation}\label{Murnaghan:Nakayama} p_k s_\lambda = \sum_{\substack{\nu \supseteq \lambda,  \\ \nu \setminus \lambda \text{ is a border strip} \\ \text{of length $k$}} }(-1)^{\operatorname{height}(\nu \setminus \lambda)} s_\nu \text{,} \end{equation}
\cite[p.~ 601]{Okounkov:Vershik:1996}.
Using \eqref{Murnaghan:Nakayama}, in conjunction with \cite[I.V. Ex. 3, p.~ 75]{Macdonald:Sym:Func}, we deduce the adjoint form of the Murnaghan-Nakayama rule:
\begin{equation}\label{Murnaghan:Nakayama:adjoint} 
p_k^{\perp} s_\lambda = \sum_{\substack{ \lambda \supseteq \nu, \\ \lambda \setminus \nu \text{ is a border strip} \\ \text{of length $k$}}} (-1)^{\operatorname{height}(\lambda \setminus \nu)} s_\nu\text{.} \end{equation}

\np{}\label{2.7}
We let $\mathrm{Mat}(\infty)$ denote the $\CC$-vector space of $\ZZ \times \ZZ$ matrices with entries in $\CC$.  If
$A=\left(a_{ij}\right)_{i,j\in\ZZ}, B=\left(b_{ij}\right)_{i,j\in\ZZ} \in \mathrm{Mat}(\infty)$
and $a_{ik} b_{kj} = 0$, for all $i,j\in\ZZ$ and almost all $k \in \ZZ$, then their product is given by 
$C = AB := \left( c_{ij}\right)_{i,j \in \ZZ}\text{,}
$
where
$c_{ij} = \sum_{k \in \ZZ} a_{ik}b_{kj}\text{.} $  Further, we
let $\E_{ij}$ denote the element of $\mathrm{Mat}(\infty)$ with $i,j$ entry equal to $1$ and all other entries equal to zero.  We say that a matrix $A=(a_{ij})_{i,j\in\ZZ} \in \operatorname{Mat}(\infty)$ is a \emph{band infinite matrix} if $a_{ij}=0$ for all $|i-j|\gg0$.  We denote the collection of band infinite matrices by ${\bf gl}(\infty)$ and regard it as a Lie algebra with Lie bracket given by
$[A,B]=AB-BA \text{.}$  We often express elements of ${\bf gl}(\infty)$ as infinite sums of matrices.  For example, the identity matrix
$1_{\ZZ \times \ZZ}=\left( \delta_{ij} \right)_{i,j\in \ZZ}$,  can be expressed as
$1_{\ZZ \times \ZZ} = \sum_{p\in \ZZ} \mathrm{E}_{pp}\text{.} $  Also every element of ${\bf gl}(\infty)$ can be written as a finite linear combination of matrices of the form $\sum_i a_i \E_{i,i+k}$, where $k \in \ZZ$ and $a_i \in \CC$.

\np{}\label{2.9}
Let  $\mathfrak{gl}_N[t,t^{-1}] :=\CC[t,t^{-1}]\otimes_\CC \mathfrak{gl}_N(\CC)$ which we regard as a Lie algebra with Lie bracket determined by 
$$[f(t)\otimes A, g(t) \otimes B ]=  f(t)g(t) \otimes [A,B] \text{.}$$  If $t^m \otimes e_{ij}$, for $i,j = 1,\dots, N$ and $m \in \ZZ$, denotes the standard basis elements of $\mathfrak{gl}_N[t,t^{-1}]$,  we then have that
$$[t^m \otimes e_{ij}, t^n \otimes e_{k \ell}]=
t^{m+n} \otimes \left(\delta_{jk}e_{i\ell} - \delta_{\ell i} e_{kj} \right), $$
and that the map
\begin{equation}\label{loop:incl}\iota_N : \mathfrak{gl}_N[t,t^{-1}] \rightarrow \mathbf{gl}(\infty)\text{,}
\end{equation} determined by 
$$t^m \otimes e_{ij} \mapsto \sum_{k \in \ZZ} \E_{N(k-m)+i,Nk+j}\text{,}$$ is a monomorphism of Lie algebras.   The image of $\iota_N$ is the Lie algebra of \emph{$N$-periodic band infinite matrices}, that is those $\ZZ \times \ZZ$ band infinite matrices $A = (a_{ij})_{i,j \in \ZZ}$ for which 
$a_{i+N,j+N}=a_{ij}\text{, for all $i,j \in \ZZ$.} $

\np{}\label{2.10} To define the Lie algebra $\widehat{\mathbf{gl}}(\infty)$, first let 
$$J := \sum_{m \leq 0} \mathrm{E}_{mm} - \sum_{m > 0} \mathrm{E}_{mm} \in {\bf gl}(\infty)$$
and observe that if $A$ and $B$ are elements of ${\bf gl}(\infty)$, then the matrix $[J,A]B$ has at most finitely many nonzero diagonal elements and the expression
$ \frac{1}{2}\operatorname{tr}\left([J,A]B \right)$
is a well defined element of $\CC$.  In particular, we have
\begin{equation}\label{eqn2.4}
\frac{1}{2}\operatorname{tr}\left([J,A]B \right) = \sum_{i\leq 0, k>0} a_{ik}b_{ki} - \sum_{i>0, k\leq 0} a_{ik}b_{ki},
\end{equation}
and we define the Lie algebra $\widehat{\mathbf{gl}}(\infty)$ to be the central extension determined by the following $2$-cocycle of $\mathbf{gl}(\infty)$ with values in the trivial $\mathbf{gl}(\infty)$-module $\CC$:
\begin{equation}\label{eqn2.5}\mathrm{c}(A,B):= \frac{1}{2}\operatorname{tr}\left([J,A]B \right) = \sum_{i \leq 0, k>0} a_{ik}b_{ki} - \sum_{i>0,k\leq 0} a_{ik}b_{ki} \text{.} 
\end{equation}
As a special case of \eqref{eqn2.5}, we have that
\begin{equation}\label{eqn2.6} \mathrm{c}(\mathrm{E}_{ij},\mathrm{E}_{k\ell}) = \begin{cases}
-1 & \text{ $i=\ell > 0$, $j=k \leq 0$} \\
1 & \text{ $i=\ell \leq 0$, $j=k>0$} \\
0 & \text{otherwise,}
\end{cases} \end{equation}
for $i,j,k,\ell \in \ZZ$;
compare with \cite[p.~ 12]{Bloch:Okounkov:2000} or \cite[p.~ 115 and p.~ 313]{Kac:infinite:Lie:Algebras}.  Explicitly, as a $\CC$-vector space 
$$\widehat{{\bf gl}}(\infty) = \CC  \oplus {\bf gl}(\infty), $$ and the Lie bracket is defined by
$$[(a,x),(b,y)]=(\mathrm{c}(x,y),[x,y])\text{,}$$ for all $(a,x),(b,y) \in \widehat{{\bf gl}}(\infty)$.

\np{}\label{2.11}  We regard $R := \CC[t,t^{-1}]$, the ring of Laurent polynomials, as the loop algebra of the abelian Lie algebra $\CC$.  The \emph{oscillator algebra} is the Lie algebra $\mathfrak{s}$ determined by the $2$-cocycle with values in the trivial $R$-module $\CC$ given by:
$$\omega : \CC[t,t^{-1}] \times \CC[t,t^{-1}] \rightarrow \CC\text{,}$$
$$\omega\left((f(t),g(t) \right) := \operatorname{res}\left(\frac{df}{dt} g \right). $$  Concretely, 
$$\mathfrak{s} = \CC \oplus \CC[t,t^{-1}] \text{,}$$
and the bracket is given by
$$[(a,t^m),(b,t^n)]=(m\delta_{m,-n},0),$$ for all $a,b \in \CC$ and $m,n\in \ZZ$. 

\np{}\label{2.12} As in \cite[p.~ 313]{Kac:infinite:Lie:Algebras}, we realize the oscillator algebra $\mathfrak{s}$  as a subalgebra of $\widehat{\mathbf{gl}}(\infty)$ by the monomorphism of Lie algebras
\begin{equation}\label{oscillator algebra:mono}
\delta_0 :\mathfrak{s} \rightarrow \widehat{\mathbf{gl}}(\infty),
\end{equation} defined by  
$$(a,t^m) \mapsto \left(a,\sum_{j \in \ZZ} \E_{j,j+m}\right).$$

\section{The Lie algebra $\widehat{\mathbf{gl}}(\infty)$ and universal extensions}\label{3}

In this section we establish Theorem \ref{universal:extension:theorem} which shows how  the Lie algebra $\widehat{\mathbf{gl}}(\infty)$ is related to the Lie algebra $\widehat{g}$ which we define to be the \emph{universal (central) extension} of $g[t,t^{-1}]$ the Loop algebra of $g$ a complex finite dimensional semi-simple Lie algebra.  

\np{}\label{3.1}
Let $g$ be a complex finite dimensional semi-simple Lie algebra and $\kappa(\cdot,\cdot)$ its killing form.  We denote by $\widehat{g}$ the \emph{universal extension} of $g[t,t^{-1}]$.  Then $\widehat{g}$ is the central extension determined by the $2$-cocycle 
\begin{equation}\label{eqn3.1} u(\cdot,\cdot) : g[t,t^{-1}]\times g[t,t^{-1}] \rightarrow \CC
\end{equation} 
defined by
\begin{equation}\label{eqn3.2} 
u\left(\sum t^i\otimes x_i, \sum t^j \otimes y_j\right) :=\sum i \kappa(x_i,y_{-i})\text{,} 
\end{equation} 
\cite[\S 2]{Garland:1980} see also \cite[\S 7.9]{Weibel} especially \cite[\S 7.9.6, p.~ 250]{Weibel}.

To relate $\widehat{g}$ and $\widehat{\mathbf{gl}}(\infty)$, we choose a basis for $g$ and then consider its \emph{extended adjoint representation}:
\begin{equation}\label{eqn3.3} 1\otimes \operatorname{ad} : g[t,t^{-1}] \rightarrow \mathbf{gl}(\infty)\text{,}
\end{equation}
see  \eqref{eqn3.4} below.

The morphism $1\otimes \operatorname{ad}$, given by \eqref{eqn3.3}, allows us to compare the pullback of $\widehat{\mathbf{gl}}(\infty)$, with resect to $1\otimes \operatorname{ad}$, and the universal extension $\widehat{g}$.  In \S \ref{Sec3.3}, we prove:

\begin{theorem}\label{universal:extension:theorem}
The universal central extension of $g[t,t^{-1}]$ is the pull-back of $\widehat{\mathbf{gl}}(\infty)$ via $1\otimes \operatorname{ad}$, the extended adjoint representation of $g$.
\end{theorem}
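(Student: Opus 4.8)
The plan is to reduce the statement to an identity of $2$-cocycles. Pulling back a central extension along a Lie algebra homomorphism is the same as pulling back its defining cocycle, so it suffices to show that the cocycle $(1\otimes\operatorname{ad})^*\mathrm{c}$, obtained by restricting $\mathrm{c}$ of \eqref{eqn2.5} along the extended adjoint representation \eqref{eqn3.3}, coincides with the cocycle $u$ of \eqref{eqn3.2} defining $\widehat{g}$. As both cocycles are bilinear and $g[t,t^{-1}]$ is spanned by the elements $t^m\otimes x$, it is enough to compare their values on such elements.

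First I would factor the extended adjoint representation \eqref{eqn3.3} as $\iota_N\circ(\id\otimes\operatorname{ad})$, where $N=\dim g$, $\operatorname{ad}\colon g\hookrightarrow\mathfrak{gl}_N(\CC)$ is the adjoint representation written in the chosen basis, $\id\otimes\operatorname{ad}\colon g[t,t^{-1}]\to\mathfrak{gl}_N[t,t^{-1}]$ is the induced map on loop algebras, and $\iota_N$ is the periodic embedding \eqref{loop:incl}. This splits the problem into a universal part, the pullback $\iota_N^*\mathrm{c}$ on $\mathfrak{gl}_N[t,t^{-1}]$ which does not involve $g$, followed by the substitution of $\operatorname{ad}$.

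The heart of the argument is to evaluate $\mathrm{c}\bigl(\iota_N(t^m\otimes e_{ab}),\iota_N(t^n\otimes e_{cd})\bigr)$ on basis elements. Expanding $\iota_N(t^m\otimes e_{ab})=\sum_{k\in\ZZ}\E_{N(k-m)+a,\,Nk+b}$ and likewise the second factor, one checks that a nonzero term in the explicit sum \eqref{eqn2.5} forces $b=c$, $a=d$ and $n=-m$, these being the congruence and degree constraints imposed by $N$-periodicity. Under these constraints the surviving entries are indexed by $r\in\ZZ$ through the pair $i=Nr+a$ and $k=N(r+m)+b$, and $\mathrm{c}$ counts, with signs, the $r$ for which exactly one of $i,k$ is positive. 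Because $1\leq a,b\leq N$ one has $i\leq 0\iff r<0$ and $k>0\iff r\geq -m$, so the positively and negatively signed sums in \eqref{eqn2.5} run over $-m\leq r\leq -1$ and $0\leq r\leq -m-1$ respectively; a short case analysis on the sign of $m$ returns the value $m$ in every case. I expect this index bookkeeping, keeping track of which indices straddle the origin together with the signs, to be the one genuine obstacle.

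Consequently $\mathrm{c}\bigl(\iota_N(t^m\otimes e_{ab}),\iota_N(t^n\otimes e_{cd})\bigr)=m\,\delta_{m,-n}\,\delta_{ad}\delta_{bc}=m\,\delta_{m,-n}\operatorname{tr}(e_{ab}e_{cd})$, and bilinear extension gives $\mathrm{c}\bigl(\iota_N(t^m\otimes P),\iota_N(t^n\otimes Q)\bigr)=m\,\delta_{m,-n}\operatorname{tr}(PQ)$ for all $P,Q\in\mathfrak{gl}_N(\CC)$. Taking $P=\operatorname{ad}(x)$ and $Q=\operatorname{ad}(y)$ and using $\kappa(x,y)=\operatorname{tr}(\operatorname{ad}(x)\operatorname{ad}(y))$ yields $(1\otimes\operatorname{ad})^*\mathrm{c}\,(t^m\otimes x,t^n\otimes y)=m\,\delta_{m,-n}\,\kappa(x,y)$, which is exactly $u(t^m\otimes x,t^n\otimes y)$ by \eqref{eqn3.2}. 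Hence the two cocycles agree identically, and the pullback of $\widehat{\mathbf{gl}}(\infty)$ along $1\otimes\operatorname{ad}$ is the central extension of $g[t,t^{-1}]$ determined by $u$, that is $\widehat{g}$, as required.
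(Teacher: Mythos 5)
Your proposal is correct and follows essentially the same route as the paper: it establishes the cocycle identity $\mathrm{c}(\iota_N(t^m\otimes x),\iota_N(t^n\otimes y))=m\,\delta_{m,-n}\operatorname{tr}(xy)$ by direct evaluation on elementary matrices (this is exactly Proposition \ref{cocycle:calc}), and then concludes via $\kappa(x,y)=\operatorname{tr}(\operatorname{ad}x\operatorname{ad}y)$. Your single-index bookkeeping with $r$ is a slightly tidier version of the paper's count over pairs $(p,q)$, but the argument is the same.
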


\np{}\label{Sec3.2}
Before proving Theorem \ref{universal:extension:theorem} we first observe:

\begin{proposition}\label{cocycle:calc} The pullback of $\operatorname{c}(\cdot,\cdot)$ to  $\mathfrak{gl}_N[t,t^{-1}]$ via $\iota_N$ is given by: 
\begin{equation}\label{eqn3.4} \operatorname{c}(\iota_N(t^m\otimes x),\iota_N(t^n\otimes y)) = m \delta_{m,-n}\operatorname{tr}(xy)\text{.} 
\end{equation}
\end{proposition}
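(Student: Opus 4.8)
The plan is to reduce to matrix units and then carry out a direct computation with the explicit cocycle formula \eqref{eqn2.6}. Since both sides of \eqref{eqn3.4} are $\CC$-bilinear in $x$ and $y$ for fixed $m,n$, and since $\iota_N$ is $\CC$-linear, it suffices to establish the identity when $x = e_{ij}$ and $y = e_{k\ell}$ are standard basis matrix units of $\mathfrak{gl}_N(\CC)$. For such elements $e_{ij}e_{k\ell} = \delta_{jk}e_{i\ell}$, so that $\operatorname{tr}(e_{ij}e_{k\ell}) = \delta_{jk}\delta_{i\ell}$; the goal is therefore to show that $\operatorname{c}(\iota_N(t^m\otimes e_{ij}),\iota_N(t^n\otimes e_{k\ell})) = m\delta_{m,-n}\delta_{jk}\delta_{i\ell}$.

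First I would expand, using the definition of $\iota_N$ together with the bilinearity of $\operatorname{c}$,
\begin{equation*}
\operatorname{c}(\iota_N(t^m\otimes e_{ij}),\iota_N(t^n\otimes e_{k\ell})) = \sum_{p,q\in\ZZ}\operatorname{c}\bigl(\E_{N(p-m)+i,\,Np+j},\,\E_{N(q-n)+k,\,Nq+\ell}\bigr).
\end{equation*}
By \eqref{eqn2.6} a term is nonzero only when the two adjacent indices agree and the two outer indices agree, i.e. $Np + j = N(q-n)+k$ and $N(p-m)+i = Nq+\ell$. Reducing these two equations modulo $N$ and using $i,j,k,\ell\in\{1,\dots,N\}$ forces $j=k$ and $i=\ell$; the residual equations then read $p = q-n$ and $p = q+m$, which are simultaneously solvable precisely when $m = -n$. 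This produces the factors $\delta_{jk}\delta_{i\ell}\delta_{m,-n}$ and in particular shows the sum vanishes unless $m=-n$.

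Assuming now $m=-n$, $j=k$ and $i=\ell$, the surviving terms are indexed by a single integer $q$ (with $p=q+m$), and applying the two nonzero cases of \eqref{eqn2.6} reduces the computation to the signed count
\begin{equation*}
\sum_{q\in\ZZ}\Bigl[\mathbf{1}\bigl(Nq+\ell\leq 0,\ N(q+m)+j>0\bigr) - \mathbf{1}\bigl(Nq+\ell>0,\ N(q+m)+j\leq 0\bigr)\Bigr],
\end{equation*}
where $\mathbf{1}(\cdots)$ is the indicator of the displayed conditions. Here I would use that, because $\ell,j\in\{1,\dots,N\}$, the inequalities collapse to $Nq+\ell\leq 0 \iff q<0$ and $N(q+m)+j\leq 0 \iff q<-m$. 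The two indicator sets then become $\{-m,\dots,-1\}$ and $\{0,\dots,-m-1\}$ respectively; exactly one of them is nonempty according to the sign of $m$, and a short case analysis on $m>0$, $m<0$ and $m=0$ shows the signed count equals $m$ in every case. This yields the value $m\delta_{m,-n}\delta_{jk}\delta_{i\ell}$, and bilinearity finishes the proof.

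The main obstacle I anticipate is the final step: getting the signs and the index ranges in the lattice-point count exactly right, since the cocycle \eqref{eqn2.6} contributes with opposite signs in its two cases and an off-by-one in the ranges, or a sign slip between the two indicator functions, would change the answer from $m$ to $-m$ or shift it by a constant. The modular reduction that forces $i=\ell$, $j=k$ and $m=-n$ is routine, so the crux is the careful evaluation of the telescoping-type count that recovers the integer $m$.
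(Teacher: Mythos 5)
Your proof is correct and follows essentially the same route as the paper's: both reduce by bilinearity to matrix units, expand the cocycle into a double sum of matrix-unit contributions indexed by $p,q\in\ZZ$, use congruences modulo $N$ to force $j=k$, $i=\ell$ and $m=-n$, and then evaluate the resulting signed lattice-point count to obtain $m$. The only cosmetic difference is that you parametrize the surviving terms by the single index $q$ (with $p=q+m$) and invoke the matrix-unit formula \eqref{eqn2.6}, whereas the paper works directly from \eqref{eqn2.5} and counts pairs $(p,q)$.
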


\begin{proof}
In light of the map \eqref{loop:incl}, it suffices to check that, for fixed $N \in \ZZ_{\geq 1}$, $1 \leq i,j,k,\ell \leq N$, $m,n\in\ZZ$, we have
\begin{multline*}
\mathrm{c} \left( \sum_{p \in \ZZ} \E_{N(p-m)+i, Np+j}, \sum_{q \in \ZZ} \E_{N(q-n)+k, Nq+\ell} \right) \\ =  
\begin{cases}
m & \text{if $j=k$, $i=\ell$ and $m=-n$.} \\
0 & \text{ otherwise.}
\end{cases}
\end{multline*}

 To compute 
$$\mathrm{c} \left( \sum_{p \in \ZZ} \E_{N(p-m)+i, Np+j}, \sum_{q \in \ZZ} \E_{N(q-n)+k, Nq+\ell} \right)\text{,}
$$ considering \eqref{eqn2.5}, it is clear that 
we need to understand the quantity:
\begin{multline}\label{important:quantity}
\sum_{\substack{p,q \in \ZZ, \\
Np+j > 0, \\
k+N(q-n)>0, \\
i+N(p-m)\leq 0,  \\
Nq+\ell \leq 0}} \delta_{Np+j,N(q-n)+k} \delta_{N(p-m)+i,Nq+\ell} 
\\
- \sum_{\substack{p,q \in \ZZ, \\ 
Np+j \leq 0, \\
k+N(q-n) \leq 0, \\
i+N(p-m) > 0,  \\
Nq+\ell > 0}} \delta_{Np+j,N(q-n)+k} \delta_{N(p-m)+i,Nq+\ell}\text{.}
\end{multline}

To this end, we make the following deductions:
\begin{enumerate}
\item{if \eqref{important:quantity} is nonzero, then $m=-n$;}
\item{if $m\geq 0$ the first sum appearing in \eqref{important:quantity} is nonzero if and only if $j=k$ and $i=\ell$, while the second sum is zero; the nonzero summands appearing \eqref{important:quantity}, when $j=k$ and $i=\ell$, are in bijection with the set of pairs $(p,q) \in \ZZ \times \ZZ$ with $-1\leq p \leq m$ and $n\leq q < 0$;}
\item{if $m<0$, the second sum appearing in \eqref{important:quantity} is nonzero if and only if $j=k$ and $i=\ell$, while the first sum is zero; the nonzero summands appearing in \eqref{important:quantity}, when $j=k$ and $i=\ell$, are in bijection with the set of pairs $(p,q) \in \ZZ \times \ZZ$ with $m\leq p < 0$, $0\leq q < n$.}
\end{enumerate}
The conclusion of Proposition \ref{cocycle:calc} follows immediately from these deductions. 
\end{proof}

\np{}\label{Sec3.3} We now establish Theorem \ref{universal:extension:theorem}.  To do so, first consider an arbitrary semi-simple Lie algebra $g$ and its adjoint representation
$$ \operatorname{ad} : g \rightarrow \operatorname{End}_\CC(g)\text{.}$$
Let $N=\dim_\CC g$ and fix a basis for $g$.  By composition we obtain a representation 
$$ \operatorname{ad} : g \rightarrow \operatorname{End}_\CC(g) \xrightarrow{\sim} \mathfrak{gl}_N(\CC),$$
which we can use to define the \emph{extended adjoint representation} of $g$
\begin{equation}\label{eqn3.4} g[t,t^{-1}] \xrightarrow{1\otimes \operatorname{ad}} \mathfrak{gl}_N(\CC)[t,t^{-1}] \xrightarrow{\iota_N} \mathbf{gl}(\infty)\text{.} 
\end{equation}

The homomorphism \eqref{eqn3.4}  allows us to compare the pull-back of $\widehat{\mathbf{gl}}(\infty)$, via $1 \otimes \operatorname{ad}$, with $\widehat{g}$.

\begin{proof}[Proof of {Theorem \ref{universal:extension:theorem}}]
It is enough to show that 
$$u\left(\sum t^i \otimes x_i, \sum t^j \otimes y_j  \right) :=\sum i \kappa(x_i,y_{-i}) $$
equals
$$\mathrm{c}\left(\sum t^i\otimes \operatorname{ad}x_i, \sum t^j \otimes \operatorname{ad}y_j \right) \text{.} $$
That this equality holds true follows from the fact that 
$$\kappa(x,y):=\operatorname{tr}(\operatorname{ad}x\operatorname{ad}y)$$ 
and from Proposition \ref{cocycle:calc}.
\end{proof}

\section{Semi-infinite monomials and the infinite wedge representation}\label{4}

In this section we study certain subsequences of $\ZZ$ which we refer to as \emph{semi-infinite monomials}, see \S \ref{4.1}.  We then describe the \emph{infinite wedge space} and the \emph{infinite wedge representation} of the Lie algebra $\widehat{\mathbf{gl}}(\infty)$, see \S \ref{4.6} and \S \ref{4.8} respectively.  What we do here is influenced heavily by what is done in \cite{Kac:infinite:Lie:Algebras}, \cite{Segal:Wilson:85}, \cite{jimbo:miwa:1983} and \cite{Miwa:Jimbo:Date}.  We give proofs of all assertions for completeness and because they are needed in our proof of Theorem \ref{bosonic:Extension:theorem}.

\np{}\label{4.1}
By a \emph{semi-infinite monomial} we mean an ordered strictly decreasing sequence of integers $S=(s_1,s_2,\dots)$, $s_i \in \ZZ$, with the properties that $s_i = s_{i-1}-1$ for all $i \gg 0$.   We let $\mathscr{S}$ denote the set of semi-infinite monomials.  If $S \in \mathscr{S}$, then define strictly decreasing sequences of integers $S_+$ and $S_-$ by $S_+:= S \setminus \ZZ_{\leq 0}$ and $S_- := \ZZ_{\leq 0} \setminus S$.  

\np{}\label{4.2}If $S = (s_1,s_2,\dots) \in  \mathscr{S}$, then there exists a unique integer $m$ with the property that $s_i = m-i+1$ for all $i \gg 0$.  We refer to this number as the \emph{charge} of $S$ and denote it by $\operatorname{charge}(S)$, compare with \cite[p.~ 12]{Segal:Wilson:85}, \cite[p.~ 310]{Kac:infinite:Lie:Algebras}, and \cite[A.3]{Okounkov:2001:partitions} for instance.  If $m \in \ZZ$, then let $\mathscr{S}_m := \{ S \in \mathscr{S} : \operatorname{charge}(S) = m \}$.  
We record the following proposition for later use.

\begin{proposition}\label{prop5.1}
The following assertions hold true:
\begin{enumerate}
\item{If $S \in \mathscr{S}$, then $\operatorname{charge}(S) = \# S_+ - \# S_-$;}
\item{Let $m \in \ZZ$.  The map $\lambda: \mathscr{S}_m \rightarrow \mathscr{P}$ defined by 
$$S=(s_1,s_2,\dots) \mapsto \lambda(S) = (\lambda_1,\lambda_2,\dots)\text{,} $$
where 
\begin{equation}  \lambda_j := s_j - m + j - 1, 
\end{equation} is a bijection.}
\end{enumerate}
\end{proposition}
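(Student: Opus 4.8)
The plan is to treat parts (a) and (b) separately, with part (a) resting on a finite-window double count and part (b) on writing down an explicit inverse.

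For part (a), I would first record that both $S_+$ and $S_-$ are finite: since $\operatorname{charge}(S) = m$ forces $s_i = m - i + 1 \to -\infty$, only finitely many terms of $S$ are positive, so $\# S_+ < \infty$; dually, all sufficiently negative integers occur in $S$, so only finitely many non-positive integers are missing and $\# S_- < \infty$. I would then fix an integer $L$ large enough that $s_1 < L$, that $S_+ \subseteq \{1,\dots,L\}$, that $S_- \subseteq \{-L+1,\dots,0\}$, and that every $s_i$ with $s_i \leq -L$ lies in the arithmetic tail, i.e. satisfies $s_i = m-i+1$. The key step is to count $\# \bigl(S \cap \{-L+1,\dots,L\}\bigr)$ in two ways. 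On one hand, the elements of $S$ exceeding $-L$ are exactly $s_1,\dots,s_{m+L}$, using the tail relation to see that $s_i > -L \iff i \leq m+L$; since each is $\leq s_1 < L$, all of them fall inside the window, giving $m+L$ elements. On the other hand, splitting the window into its non-positive part $\{-L+1,\dots,0\}$, which contains $L - \# S_-$ elements of $S$, and its positive part $\{1,\dots,L\}$, which contains $\# S_+$ elements of $S$, gives $L - \# S_- + \# S_+$. Equating the two counts yields $m = \# S_+ - \# S_-$.

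For part (b), I would first verify that $\lambda(S)$ is a genuine partition. Strict decrease of $S$ gives $\lambda_j - \lambda_{j+1} = s_j - s_{j+1} - 1 \geq 0$, so the sequence is weakly decreasing; the arithmetic tail $s_i = m-i+1$ gives $\lambda_i = 0$ for $i \gg 0$, so only finitely many parts are nonzero, and by monotonicity every part is non-negative. Hence $\lambda(S) \in \mathscr{P}$ and the map is well defined. I would then establish injectivity and surjectivity simultaneously by exhibiting the inverse: given $\lambda \in \mathscr{P}$, set $s_j := \lambda_j + m - j + 1$. Then $s_j - s_{j+1} = (\lambda_j - \lambda_{j+1}) + 1 \geq 1$ gives strict decrease, $\lambda_i = 0$ for $i \gg 0$ gives $s_i = m - i + 1$ eventually, so $S \in \mathscr{S}$ with $\operatorname{charge}(S) = m$, and this assignment is patently two-sided inverse to $S \mapsto \lambda(S)$.

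The verifications in part (b) are routine bookkeeping; the one step demanding genuine care is the double count in part (a), where the single threshold $L$ must simultaneously confine $S_+$ and $S_-$ to the two halves of the window and push the onset of the arithmetic tail below $-L$, and where one must check that the $m+L$ terms $s_1,\dots,s_{m+L}$ really do all land inside the chosen window.
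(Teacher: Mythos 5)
Your proof is correct, and while your part (b) is essentially the paper's argument, your part (a) takes a genuinely different route. The paper proceeds by explicit enumeration: writing $S_+ = (s_1,\dots,s_\ell)$ and $S_- = (n_1,\dots,n_r)$, it locates the exact index $n = \ell - n_r - r + 1$ beyond which $S$ consists of all integers below $n_r$ (this is \eqref{prop5.1.1}), and then verifies directly that $m = \ell - r$ satisfies the defining property of the charge and, conversely, that any $m$ satisfying it equals $\ell - r$. Your argument replaces this bookkeeping with a double count of $S \cap \{-L+1,\dots,L\}$ for a single large cutoff $L$. Each approach buys something: the paper's computation pins down exactly where the arithmetic tail begins, a fact it reuses (\S~\ref{4.3} identifies $\operatorname{length}(\lambda(S))$ with that index $n$), whereas your count is more uniform and, in particular, covers the degenerate cases $S_+ = \emptyset$ or $S_- = \emptyset$ without modification, while the paper's formula is written in terms of $n_r$ and so tacitly assumes $r \geq 1$. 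One point of precision in your write-up: the equivalence $s_i > -L \iff i \leq m+L$ does not follow from the tail condition alone, as your parenthetical suggests. The implication $s_i > -L \Rightarrow i \leq m+L$ also uses the containment $S_- \subseteq \{-L+1,\dots,0\}$: that containment guarantees $-L \in S$, the tail condition then forces its index to be $m+L+1$ (from $-L = m - i_0 + 1$), and strict decrease of $S$ yields both directions of the equivalence at once. Since you imposed this containment among your conditions on $L$, this is a one-line completion of the step you yourself flagged as delicate, not a gap. Finally, in part (b) you additionally check that $\lambda(S)$ is weakly decreasing, non-negative, and eventually zero, and that the inverse lands in $\mathscr{S}_m$; the paper asserts injectivity outright and records the same inverse, so your version is marginally more complete but structurally identical.
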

\begin{proof}
To prove (a), let $S := (s_1,s_2,\dots) \in \mathscr{S}$, and write:
\begin{equation}\label{prop5.1.0}
 S_+ := (s_1,\dots, s_\ell) \\ 
\text{ and }
 S_- := (n_1,\dots, n_r); 
\end{equation} 
 here 
$s_1 > s_2 > \dots > s_\ell$ and $0 \geq n_1 > n_2 > \dots > n_r$.  Considering the definitions of $S_+$ and $S_-$ we deduce that
\begin{equation}\label{prop5.1.1}
s_{n+k} = n_r - k \text{ for $n := \ell-n_r-r+1$ and $k \geq 1$.}
\end{equation}
Now suppose that $m := \ell - r = \# S_+ - \# S_-$ and let $i = n + k$ for $k \geq 1$.  We then have
$ m-i+1= n_r - k$
which equals $s_i$ by \eqref{prop5.1.1}.  Conversely, suppose that $s_i = m-i+1$ for all $i \gg 0$.  We then have for all $k \gg0$ that
\begin{equation}\label{prop5.1.2}
s_{n+k} = m-n-k+1 = m-\ell+n_r+r-k.
\end{equation}
Combining \eqref{prop5.1.1} and \eqref{prop5.1.2}, we then have
\begin{equation}
n_r-k = m-\ell +n_r+r-k
\end{equation}
and so $m=\ell-r$ as desired.

For (b), first note that the map $\lambda$ is clearly injective.  To see that it is surjective, if $\lambda = (\lambda_1,\lambda_2,\dots) \in \mathscr{P}$, then define an element $S = (s_1,s_2,\dots) \in \mathscr{S}_m$ by $s_j = \lambda_j + m - j + 1$.  By construction $S \in \mathscr{S}$.  To see that $S \in \mathscr{S}_m$ note that $s_j = m-j + 1$ for $j > \operatorname{length}(\lambda)$.  
\end{proof}

\np{}\label{4.3}{\bf Remark.}  When we express $S \in \mathscr{S}$ as in \eqref{prop5.1.0}, the length of the partition $\lambda(S)$ equals the number $n$ defined in \eqref{prop5.1.1}.
Also the weight of the partition $\lambda(S)$ is sometimes referred to as the \emph{energy} of  $S$, \cite[p.~ 310]{Kac:infinite:Lie:Algebras}.

\np{}\label{4.4}{\bf Example.}
We can use the approach of \cite[\S 7.2]{Mike:covers} to give a graphical interpretation of Proposition \ref{prop5.1} for the case $m=0$.  The case $m \not = 0$ can be handled similarly with a shift.  As an example, the Young diagram associated to the partition $\lambda = (4,4,3,3,2,2,1) \in \mathscr{P}_{19}$ is:
$$
 \begin{Young}
&&&\cr \\
&&&\cr \\
&&\cr \\
&&\cr \\
&\cr \\
&\cr \\
\cr
\end{Young}
$$
If we cut this Young diagram along the main diagonal then there are $3$ rows in the top piece and $3$ columns in the bottom piece.  Let $u_i$, $i=1,2,3$, denote the number of boxes in the $i$-th row of the top piece and let $v_i$, $i=1,2,3$, denote the number of boxes in the $i$-th column of the bottom piece.  Then, $u_1 = 3.5$, $u_2 = 2.5$, $u_3 = .5$ and $v_1 = 6.5$, $v_2 = 4.5$, and $v_3 = 1.5$.

If $S$ is the charge zero semi-infinite monomial corresponding to $\lambda$, then $S$ is determined by the condition that 
$$S_+=(u_1+.5, u_2 + .5, u_3 + .5) = (4,3,1)$$ and 
$$S_-=(-v_3+.5, -v_2 + .5, -v_1 + .5) = (-1,-4, -6)\text{.}$$
In other words, 
$$S=(4,3,1,0,-2,-3,-5,-7,-8,\dots)$$
is the element of $\mathscr{S}_0$ corresponding to the partition 
$$\lambda = (4,4,3,3,2,2,1).$$

We can also relate the set $S$ to the \emph{code}, in the sense of \cite[\S 2]{Carrell:Goulden:2010}, of the partition $\lambda$.  Specifically, if $n \in \ZZ$, $n\geq 1$ and $n\not \in S$, then $n$ corresponds to an $R$; if $n \geq 1$ and $n \in S$, then $n$ corresponds to a $U$.  If $n \in \ZZ$, $n\leq0$ and $n \in S$, then $n$ corresponds to a $U$; if $n \leq 0$ and $n \not \in S$, then $n$ corresponds to an $R$.  The string consisting of these $R$'s and $U$'s is the code corresponding to $\lambda$ and our set $S$.

\np{}\label{4.5}
Let $\lambda:\mathscr{S} \rightarrow \mathscr{P}$ denote the extension of the bijections $\lambda : \mathscr{S}_m \rightarrow \mathscr{P}$ described in Proposition \ref{prop5.1} (b).  Also, to keep track of various minus signs which appear in what follows, we make the following definition: if $S \in \mathscr{S}$ and $j \in \ZZ$, then define $\operatorname{count}(j,S)$ to be the number of elements of $S$ that are strictly greater than $j$, that is:
\begin{equation}\label{5.5}
\operatorname{count}(j,S) := \# \{s \in S : j < s \} \text{.} 
\end{equation}

\np{}\label{4.6}
The \emph{infinite wedge space} is the $\CC$-vector space 
$F:=\bigoplus_{S \in \mathscr{S}} \CC$ determined by the set $\mathscr{S}$, see for instance \cite[\S 14.15]{Kac:infinite:Lie:Algebras} or \cite[p.~ 76]{Okounkov:2001:partitions}. In particular,
$$F = \operatorname{span}_\CC \{v_S : S \in \mathscr{S} \} $$ where $v_S =(r_T)_{T \in \mathscr{S}}$ denotes the element of $F$ given by $r_T=0$ for $T \not = S$ and $r_S=1$.  
If $m \in \ZZ$, then let 
$F^{(m)} := \Span_\CC \{ v_S : S \in \mathscr{S}_m \}\text{.} $ We then have $F= \bigoplus_{m \in \ZZ} F^{(m)}$, compare with \cite[p.~ 310]{Kac:infinite:Lie:Algebras}.

\np{}\label{fermi:creation:ann:1}%
We now recall the definition of \emph{wedging} and \emph{contracting} operators.  Our approach here is only notationally different from that of \cite[p.~ 311]{Kac:infinite:Lie:Algebras}.  On the other hand, we find our approach useful for relating these operators to our combinatorial construction on partitions, see \S \ref{fermi:gl} and especially Proposition \ref{prop6.2}.

To begin with, if $S = (s_1,s_2,\dots)$ is an ordered strictly decreasing sequence of integers and $j \in \ZZ$, then we use the notations $S\cup \{j\}$ and $S \setminus \{j\}$ to denote the ordered strictly decreasing sequence of integers determined by the sets $\{s_1,s_2,\dots\} \cup\{j\}$ and $\{s_1,s_2,\dots\} \setminus \{j\}$ respectively.
  
Next, given $j \in \ZZ$, define elements $f_j, f_j^* \in \operatorname{End}_\CC(F)$, for $j \in \ZZ$, by:
\begin{equation}\label{creation1}
f_j(v_S) := \begin{cases}
(-1)^{\operatorname{count}(j,S)} v_{S \cup \{j\}} & \text{if $j\not \in S$} \\
0 & \text{ if $j \in S$} 
\end{cases}
\end{equation}
and
\begin{equation}\label{ann1}
f^*_j(v_S) :=
\begin{cases}
(-1)^{\operatorname{count}(j,S)}v_{S \setminus \{j\}} & \text{ if $j \in S$} \\
0 & \text{ if $j \not \in S$,}
\end{cases}
\end{equation}
and extending $\CC$-linearly, compare with \cite[\S 14.17]{Kac:infinite:Lie:Algebras}, \cite[p.~ 12]{Bloch:Okounkov:2000}, and \cite[\S A]{Bloch:Okounkov:2000}.
These endomorphisms have the properties that
\begin{equation}\label{eq4.9}
\text{ $f_if_j^*+f_j^*f_i = \delta_{ij}$, $f_if_j+f_jf_i=0$,  $f_i^*f_j^*+f_j^*f_i^* =0$}
\end{equation}
and
\begin{equation}\label{eq4.10}
[f_if_j^*, f_\ell f^*_k] = \delta_{j\ell}f_if_k^*-\delta_{ik}f_\ell f_j^*,
\end{equation}
for all $i,j,k,\ell \in \ZZ$,
see \cite[p.~ 311]{Kac:infinite:Lie:Algebras} for example. 

For completeness, we note that \eqref{eq4.9} follows immediately from the definitions given in \eqref{creation1} and \eqref{ann1}.  On the other hand,  \eqref{eq4.10} is a consequence of \eqref{eq4.9}.  Indeed, first note:
$$ [f_if_j^*, f_\ell f_k^*]=f_if_j^* f_\ell f_k^* - f_i f_\ell f_k^* f_j^*+f_if_\ell f_k^* f_j^* - f_\ell f_k^* f_if_j^*$$
which can be rewritten using the second and third properties of \eqref{eq4.9} as:
\begin{equation}\label{fermi:calc:1}
f_i(f_j^*f_\ell + f_\ell f_j^*)f_k^*-f_\ell(f_if_k^*+f_k^*f_i)f_j^*\text{.}
\end{equation}
Applying the first property given in \eqref{eq4.9} to \eqref{fermi:calc:1} yields the righthand side of \eqref{eq4.10}.

 Note also that the operators $f_i$, for $i\in\ZZ$, map $F^{(m)}$ to $F^{(m+1)}$, the operators $f_i^*$, for $i\in\ZZ$, map $F^{(m)}$ to $F^{(m-1)}$ whereas the operators $f_i f_j^*$, for $i,j\in\ZZ$, map $F^{(m)}$ to $F^{(m)}$.

\np{}\label{4.8}
The \emph{infinite wedge representation} is the Lie algebra homomorphism 
$$\rho :\widehat{\mathbf{gl}}(\infty) \rightarrow \End_\CC(F)$$ determined by the conditions that
\begin{equation}\label{4.8'} \rho((0,\E_{ij})) = \begin{cases} f_if_j^* & \text{ if $i\not = j$ or $i=j>0$} \\
f_if_i^* - \operatorname{id}_F & \text{ if $j=i \leq 0$}
\end{cases} 
\end{equation}
and 
\begin{equation}\label{4.8''}
\rho((a,0))=a \operatorname{id}_F, 
\end{equation}
for $a \in \CC$, compare with \cite[p.~ 313]{Kac:infinite:Lie:Algebras} for instance.

The fact that the above conditions \eqref{4.8'} and \eqref{4.8''} determine a representation of Lie algebras is deduced easily from property \eqref{eq4.10} above together with the definition of the $2$-cocycle $\operatorname{c}(\cdot,\cdot)$, given in \eqref{eqn2.6}, and the fact that every element of $\mathbf{gl}(\infty)$ can be written as a finite linear combination of matrices of the form 
$\sum_{i\in\ZZ}a_i \E_{i,i+k}\text{,}$ where $k \in \ZZ$ and $a_i \in \CC$.

\np{}\label{4.9}
In what follows we refer to the restriction of $\rho$ to the image of the morphism  \eqref{oscillator algebra:mono} as the \emph{infinite wedge representation of the oscillator algebra} $\mathfrak{s}$.

\section{Combinatorial properties of the operators $f_i f_j^*$}\label{fermi:gl}

In this section we define and study certain operators on partitions.  This construction will be used in our definition of the bosonic representation of the Lie algebra $\widehat{\mathbf{gl}}(\infty)$, see \S \ref{6}.   Our main result is Proposition \ref{prop6.2} which describes the combinatorics encoded in the vector
\begin{equation}\label{psi:p:q:S:eqn1} f_if_j^*(v_S) = (-1)^{\alpha} v_{T} \text{;}
\end{equation}
here 
$$S := (s_1,s_2,\dots) \in \mathscr{S},$$  $i,j \in \ZZ$, are such that 
$$\text{$j \in S$ and $i \not \in S \setminus \{j\}$,}$$
$$T := (S \setminus \{j \}) \cup \{i\},$$ and 
$$\alpha := \operatorname{count}(i, S \setminus \{j\}) - \operatorname{count}(j,S).$$

As it turns out the combinatorics encoded in  \eqref{psi:p:q:S:eqn1} are related to a certain skew diagram associated to the partition determined by $S$, see Proposition \ref{prop6.1} and Proposition \ref{prop6.2}.

\np{}\label{5.1}
Let $m,i\in\ZZ$, let $\mathscr{P}_{m,i}$ denote the set
$$\mathscr{P}_{m,i} := \{\lambda=(\lambda_1,\lambda_2,\dots) \in \mathscr{P} : \lambda_k \not = i -m+k-1 \text{ for all $k$} \} \text{,}$$
and let $\mathscr{P}_{m,i}^*$ denote the set
$$ \mathscr{P}_{m,i}^* := \{ \lambda = (\lambda_1,\lambda_2,\dots) \in \mathscr{P} : \lambda_k = i -m+k-1 \text{ for some $k$}\}.$$
Given $\lambda=(\lambda_1,\lambda_2,\dots) \in \mathscr{P}$, define 
\begin{equation}\label{6.1} \operatorname{count}_m(i,\lambda) := \# \{k : \lambda_k > i-m+k-1 \}\text{.}
\end{equation}
The main idea behind \eqref{6.1} is that if $\lambda = \lambda(S)$ is the partition corresponding to a charge $m$ semi-infinite monomial $S \in \mathscr{S}_m$, then 
\begin{equation}\label{6.2}
\operatorname{count}_m(i,\lambda) = \operatorname{count}(i,S),
\end{equation} 
where $\operatorname{count}(i,S)$ denotes the number of elements of $S$ which are strictly greater than $i$, see \eqref{5.5}.
That \eqref{6.2} holds true is easy to check using \eqref{5.5} and Proposition \ref{prop5.1} (b).

\np{}\label{5.2}  We now use \eqref{6.1} to define certain combinatorial operators on partitions.   Precisely, if $\lambda=(\lambda_1,\lambda_2,\dots) \in \mathscr{P}_{m,i}$, then define $p_{m,i}(\lambda)$ to be the partition $\mu=(\mu_1,\mu_2,\dots)$ where:
\begin{equation}\label{6.2'} \mu_j = \begin{cases} 
\lambda_j - 1 & \text{ for $j \leq \operatorname{count}_m(i,\lambda)$} \\
i-m+\operatorname{count}_m(i,\lambda) - 1 & \text{ for $j=\operatorname{count}_m(i,\lambda)+1$} \\
\lambda_{j-1} & \text{ for $j >\operatorname{count}_m(i,\lambda)+1$.}
\end{cases}
\end{equation}

On the other hand, if $\lambda=(\lambda_1,\lambda_2,\dots) \in \mathscr{P}_{m,i}^*$, then define $p_{m,i}^*(\lambda)$ to be the partition $\mu=(\mu_1,\mu_2,\dots)$ where: 
\begin{equation}\label{6.2''}\mu_j = \begin{cases} \lambda_j + 1 & \text{for $j \leq \operatorname{count}_m(i,\lambda)$} \\
\lambda_{j+1} & \text{for $j > \operatorname{count}_m(i,\lambda)$.
 }\end{cases} 
 \end{equation}
  
\np{}  The following proposition is used in the proof of Proposition \ref{prop6.2} which relates the combinatorial operators defined in \S \ref{5.2} to the operators $f_i f_j^*$ described in \S \ref{fermi:creation:ann:1} and \eqref{psi:p:q:S:eqn1}.

\begin{proposition}\label{prop6.1}
Fix $m,i,j\in \ZZ$, $\lambda = (\lambda_1,\lambda_2,\dots) \in \mathscr{P}_{m,j}^*$, let 
$$\mu := p_{m,j}^*(\lambda),$$ 
assume that $\mu \in \mathscr{P}_{m-1,i}$ and let 
$$\nu := p_{m-1,i}(\mu) = p_{m-1,i}p_{m,j}^*(\lambda).$$  
The following assertions hold true:
\begin{enumerate}
\item{if $i<j$, then $\nu \subseteq \lambda$, the skew diagram $\lambda \setminus \nu$ is a border strip, 
$$\# (\lambda \setminus \nu) = j - i,$$ and 
$$\operatorname{height}(\lambda \setminus \nu) = \operatorname{count}_{m-1}(i,\mu) - \operatorname{count}_m(j,\lambda);$$}
\item{if $i > j$, then $\lambda \subseteq \nu$, the skew diagram $\nu \setminus \lambda$ is a border strip, 
$$\#(\nu \setminus \lambda) = i-j,$$ and 
$$\operatorname{height}(\nu \setminus \lambda) = \operatorname{count}_m(j,\lambda) - \operatorname{count}_{m-1}(i,\mu).$$}
\item{if $i = j$, then $\nu = \lambda$ and the skew diagrams $\nu \setminus \lambda $ and $\lambda \setminus \nu$ are empty.}
\end{enumerate}
\end{proposition}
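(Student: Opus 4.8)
The plan is to translate the whole statement into the language of semi-infinite monomials via Proposition \ref{prop5.1}(b) and the identity \eqref{6.2}, where the operators $p^*_{m,j}$ and $p_{m-1,i}$ become transparent set operations on a beta-set. Fix $S=(s_1,s_2,\dots)\in\mathscr{S}_m$ with $\lambda=\lambda(S)$, so that $s_k=\lambda_k+m-k+1$; the hypothesis $\lambda\in\mathscr{P}_{m,j}^*$ is precisely the condition $j\in S$. First I would check, directly from \eqref{6.2'} and \eqref{6.2''}, that under the bijection $\lambda$ the operator $p^*_{m,j}$ corresponds to deleting $j$ from $S$ and $p_{m-1,i}$ to adjoining $i$; that is, $\mu=\lambda(S\setminus\{j\})$ with $S\setminus\{j\}\in\mathscr{S}_{m-1}$, and $\nu=\lambda\bigl((S\setminus\{j\})\cup\{i\}\bigr)$. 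This is a short reindexing computation, using \eqref{6.2} to rewrite each $\operatorname{count}_\bullet$ as a $\operatorname{count}(\cdot,\cdot)$. Once this is in place, the composite $p_{m-1,i}p^*_{m,j}$ is simply the move of a single particle from position $j$ to position $i$ in the Maya diagram of $S$, and the border-strip structure becomes visible.

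For case (a), write $r:=\operatorname{count}_m(j,\lambda)=\operatorname{count}(j,S)$, so that $j=s_{r+1}$, and set $b:=\#\{s\in S: i<s<j\}$. The elements of $S$ exceeding $i$ are the $r$ elements above $j$, then $j$, then the $b$ elements strictly between $i$ and $j$, so $\operatorname{count}_{m-1}(i,\mu)=\operatorname{count}(i,S\setminus\{j\})=r+b$; hence $b=\operatorname{count}_{m-1}(i,\mu)-\operatorname{count}_m(j,\lambda)$ is exactly the asserted height. Reindexing $(S\setminus\{j\})\cup\{i\}$ as a strictly decreasing sequence and converting back through $\nu_k=t_k-m+k-1$ yields the explicit description
\[
\nu_k = \begin{cases} \lambda_k & k \leq r, \\ \lambda_{k+1}-1 & r < k \leq r+b, \\ i-m+r+b & k = r+b+1, \\ \lambda_k & k \geq r+b+2. \end{cases}
\]

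From this the three claims of (a) follow. The inequalities $\nu_k\leq\lambda_k$ (using $\lambda_{k+1}\leq\lambda_k$ on the middle range and $i<s_{r+b+1}$ on the critical row) give $\nu\subseteq\lambda$; a telescoping sum over the nonzero rows gives $\#(\lambda\setminus\nu)=\lambda_{r+1}+m-r-i=j-i$, since $s_{r+1}=j$. That $\lambda\setminus\nu$ is a border strip follows because the diagram occupies the consecutive rows $r+1,\dots,r+b+1$ and, in each adjacent pair of occupied rows, the column ranges $[\lambda_{k+1},\lambda_k]$ overlap in exactly the single column $\lambda_{k+1}$, so the shape is connected and contains no $2\times2$ block. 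It occupies $b+1$ rows, so its height is $b$, as required. Case (b) is entirely symmetric: with $i>j$ the particle moves to a larger position, so boxes are added, $\lambda\subseteq\nu$, and the roles of the two counts are interchanged; case (c) is immediate, since deleting and then reinserting $j$ returns $S$, whence $\nu=\lambda$.

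The conceptual heart is this bead-moving interpretation, and the substantive work is purely bookkeeping. I expect the main obstacle to be the careful tracking of index shifts when $S\setminus\{j\}$ and $(S\setminus\{j\})\cup\{i\}$ are reindexed as strictly decreasing sequences, together with the verification that consecutive occupied rows meet in exactly one column (the precise border-strip condition). Both are routine but error-prone, and pinning down the single off-by-one in the $\operatorname{count}$ identity is exactly what fixes the height formula.
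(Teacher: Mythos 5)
Your proposal is correct, and its computational core coincides with the paper's proof: the explicit piecewise formula you obtain for $\nu$ is exactly the paper's display \eqref{6.5} (with your $r=\alpha=\operatorname{count}_m(j,\lambda)$ and $r+b=\beta=\operatorname{count}_{m-1}(i,\mu)$), and from it your verifications --- the inclusion $\nu\subseteq\lambda$, the telescoping sum giving $\#(\lambda\backslash\nu)=\lambda_{r+1}+m-r-i=j-i$, the single-column-overlap argument for the border-strip property, and the count of $b+1$ occupied rows giving the height --- are precisely the steps the paper carries out (the paper's \eqref{6.6}--\eqref{6.8} and the row count \eqref{6.5'}). The one genuine difference is how the formula for $\nu$ is derived. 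You first prove a dictionary lemma: under the bijection of Proposition \ref{prop5.1}(b), $p^*_{m,j}$ deletes $j$ from the semi-infinite monomial $S$ (dropping the charge to $m-1$) and $p_{m-1,i}$ adjoins $i$, so that $\nu=\lambda\bigl((S\backslash\{j\})\cup\{i\}\bigr)$ is a single bead move; the paper instead composes the partition-level definitions \eqref{6.2'} and \eqref{6.2''} directly, never mentioning $S$ inside this proof. Your route front-loads a fact the paper only establishes afterwards (it is exactly the identity $\nu=p_{m-1,i}p^*_{m,j}(\lambda)$ invoked in the proof of Proposition \ref{prop6.2}), and it buys conceptual clarity --- the border-strip shape is visibly forced by moving one particle from position $j$ to position $i$ --- at the cost of one extra reindexing verification; the paper's direct composition is shorter and self-contained at the level of partitions, but leaves the Maya-diagram picture implicit. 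Case (b) by symmetry and case (c) as you state them match the paper as well.
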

\begin{proof}
By assumption we have
\begin{equation} \mu := p_{m,j}^*(\lambda)
\end{equation}
and
\begin{equation} 
\nu := p_{m-1,i}(\mu) = p_{m-1,i}p_{m,j}^*(\lambda)= (\nu_1,\nu_2,\dots);
\end{equation}
set
\begin{equation}\label{6.4}
\alpha := \operatorname{count}_m(j,\lambda) \end{equation}
and  
\begin{equation}\label{6.5}
\beta := \operatorname{count}_{m-1}(i,\mu).
\end{equation}

For (a), we have $i < j$.  As a consequence, using the definitions \eqref{6.2'} and \eqref{6.2''},
we deduce that the partition $\nu = (\nu_1,\nu_2,\dots)$ has the form:
\begin{equation}\label{6.5}
\nu_k =  \begin{cases}
\lambda_k & \text{ for $1 \leq k \leq \alpha$ } \\
\lambda_{k+1} - 1 & \text{ for $\alpha + 1 \leq k \leq \beta$} \\
i-m+\beta & \text{ for $k = \beta + 1$} \\
\lambda_k & \text{ for $k \geq \beta + 2$.}
\end{cases}
\end{equation}

Considering \eqref{6.5}, it is clear that $\nu \subseteq \lambda$, that $\theta := \lambda \setminus \nu$ is a border strip, and that the number of rows of $\theta$ equals 
\begin{equation}\label{6.5'}\#[\alpha+1,\beta+1] = \beta - \alpha + 1;
\end{equation} 
it follows from \eqref{6.5'} that 
\begin{equation}\label{6.5''}
\operatorname{height}(\theta) = \beta - \alpha.
\end{equation}

Next if $\theta_k$ denotes the number of elements in the $k$th row of $\theta$, then $\theta_k = 0$ for $k \leq \alpha$ and $k \geq \beta+2$.  We also have:
\begin{equation}\label{6.6}
\theta_k = \lambda_k - \lambda_{k+1} + 1, 
\end{equation}
 for $\alpha + 1 \leq k \leq \beta$,
\begin{equation}\label{6.7}
\theta_{\beta + 1} = \lambda_{\beta+1} - i -\beta +m\text{,}
\end{equation}
and 
\begin{equation}\label{6.8}
\lambda_{\alpha+1} = j + \alpha - m.
\end{equation}
Thus, using \eqref{6.6}, \eqref{6.7}, and \eqref{6.8}, we have:
$$\sum_{k=\alpha+1}^{\beta+1} \theta_k = j + \alpha -m - i - \beta + m + \#[\alpha+1,\beta] = j - i,
$$
whence
$$\# \theta = j-i.
$$

For (b), we have $i > j$.  As a consequence, using the definitions \eqref{6.2'} and \eqref{6.2''}, we deduce that the partition $\nu = (\nu_1,\nu_2,\dots)$ is defined by:
\begin{equation}\label{6.11}
\begin{cases}
\lambda_k & \text{ for $1 \leq k \leq \beta$} \\
i+ \beta - m & \text{ for $k = \beta + 1 $} \\
\lambda_{k-1}+1 & \text{ for $\beta + 1 < k \leq \alpha+1$} \\
\lambda_k & \text{ for $k \geq \alpha+2$.}
\end{cases}
\end{equation}
Considering \eqref{6.11}, it is clear that $\lambda \subseteq \nu$, that $\theta := \nu \setminus \lambda$ is a border strip, and that the number of rows of $\theta$ equals 
\begin{equation}\label{6.12}
\#[\beta+1,\alpha+1]=\alpha-\beta+1.
\end{equation}  
Thus 
\begin{equation}\label{6.13}
\operatorname{height}(\theta) = \alpha - \beta.
\end{equation}

Next let $\theta_k$ denote the number of elements in the $k$th row of $\theta$.  Then $\theta_k = 0$ for $k \leq \beta$ and $k > \alpha + 1$.  We also have:
\begin{equation}\label{6.14}
\theta_{\beta+1} = i + \beta -m - \lambda_{\beta+1},
\end{equation}
\begin{equation}\label{6.14'}
\theta_k = \lambda_{k-1}+1-\lambda_k \text{,}
\end{equation}
 for $\beta+1 < k \leq \alpha+1$,
and
\begin{equation}\label{6.14''}
\lambda_{\alpha+1} = j + \alpha - m.
\end{equation}
Using \eqref{6.14}, \eqref{6.14'}, and \eqref{6.14''}, it follows that
$$\sum_{k = \beta + 1}^{\alpha+1} \theta_k = i + \beta - m - j - \alpha + m + \#[\beta+2,\alpha+1] = i - j
$$
so that 
$$\# \theta = i - j.
$$
Assertion (c) is trivial.
\end{proof}

\subsection{}\label{ex6.2}{\bf Example.}  
Recall, see \S \ref{4.4}, that 
$$S = (4,3,1,0,-2,-3,-5,-7,-8,\dots)$$ is the element of $\mathscr{S}_0$ corresponding to the partition 
$$\lambda = (4,4,3,3,2,2,1) \in \mathscr{P}_{19},$$ whose Young diagram is pictured in \S \ref{4.4}. 
To compute $f_{-1}f_3^*(v_S)$ note that 
$$T = (S\setminus \{3\}) \cup \{-1\} = (4,1,0,-1,-2,-3,-5,-7,-8,\dots)\text{,}$$ $\operatorname{count}(3,S)=1$ and $\operatorname{count}(-1,S\setminus \{3\})=3$.  We conclude
\begin{equation}\label{skew:diagram:eg1} f_{-1}f_3^*(v_S) = (-1)^{3-1}v_{T} = v_{T}\text{.}
\end{equation}

To see the combinatorics encoded in \eqref{skew:diagram:eg1}  first note that if 
$$\nu := \lambda(T),$$ the partition corresponding to $T$, then 
$$\nu = (4,2,2,2,2,2,1)$$ which has Young diagram
$$ \begin{Young}
&&&\cr \\
&\cr \\
&\cr \\
&\cr \\
&\cr \\
&\cr \\
\cr
\end{Young}$$
and $\nu \subseteq \lambda$.  The skew diagram $\theta := \lambda \setminus \nu$ is the set 
$$\{ \{2,3\}, \{2,4\}, \{3,3\}, \{4,3\} \} $$
which can be represented pictorially as:
$$\begin{Young}
&\cr \\
\cr \\
\cr 
\end{Young} $$ 
Note that the skew diagram $\theta$ is a border strip and $\operatorname{height}(\theta) =2$. If we now identify $S$ with $\lambda$ and $T$ with $\nu$, then \eqref{skew:diagram:eg1} takes the form
\begin{equation}\label{skew:diagram:eg1'} f_{-1}f_{3}^*(v_{\lambda}) = (-1)^{\operatorname{height}(\theta)} v_{\nu} \text{.}
\end{equation}

Suppose now that we wish to compute $f_{-1}f_{-3}^*(v_{S})$.  In this case,  $\operatorname{count}(-3,S)=5$, $\operatorname{count}(-1,S\setminus \{-3\})=4$ and hence 
\begin{equation}\label{skew:diagram:eg2} 
f_{-1}f_{-3}^*(v_S)=-1v_{T}\text{,}
\end{equation}
where 
$$T = (4,3,1,0,-1,-2,-5,-7,-8,\dots).$$

The combinatorics encoded in \eqref{skew:diagram:eg1'} is similar to that encoded in \eqref{skew:diagram:eg1}, but there is one difference which amounts to the fact that $-1 > -3$ while $3 > -1$.  In more detail, if 
$$\nu := \lambda(T),$$ then 
$$\nu = (4,4,3,3,3,3,1),$$ $\lambda \subseteq \nu$, and the skew diagram $\theta := \nu \setminus \lambda$ is 
$$\theta = \{ \{5,3\}, \{6,3\} \}$$ which is a border strip.  The border strip $\theta$ can be pictured pictorially as:
$$ 
\begin{Young}
\cr \\
\cr
\end{Young}
$$
and has height equal to one.
If we identify $S$ with $\lambda$ and $T$ with $\nu$, then \eqref{skew:diagram:eg2} takes the form
$$f_{-1}f_{-3}^*(v_\lambda) = (-1)^{\operatorname{height}(\theta)} v_{\nu}\text{.} $$

\np{} Example \ref{ex6.2} generalizes: 

\begin{proposition}\label{prop6.2}
Suppose that $S = (s_1,s_2,\dots) \in \mathscr{S}$ and $i,j \in \ZZ$.  Then $f_if_j^*(v_S) \not = 0$ if and only if $j \in S$, and $i \not \in S \setminus \{j\}$.  In addition assume that $f_i f_j^*(v_S) \not = 0$, let $T := (S \setminus \{j\})\cup \{i\}$, let $\lambda$ and $\nu$ be the partitions determined by $S$ and $T$ respectively, and denote $v_S$ by $v_\lambda$ and $v_{T}$ by $v_\nu$.  The following assertions hold true:
\begin{enumerate}
\item{If $i < j$, then $\nu \subseteq \lambda$, the skew diagram $\lambda \setminus \nu$ is a border strip of length $j-i$ and
$$f_i f_j^*(v_\lambda) = (-1)^{\operatorname{height}(\lambda \setminus \nu)} v_{\nu} \text{;} $$ }
\item{If $j<i$, then $\lambda \subseteq \nu$, the skew diagram $\nu \setminus \lambda$ is a border strip of length $i-j$ and
$$f_if_j^*(v_\lambda) = (-1)^{\operatorname{height}(\nu \setminus \lambda)}v_\nu \text{.} $$
}
\end{enumerate}
\end{proposition}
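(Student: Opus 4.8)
The plan is to reduce everything to Proposition \ref{prop6.1} by identifying the two monomial operations ``remove $j$'' and ``insert $i$'' with the partition operators $p^*_{m,j}$ and $p_{m-1,i}$. First I would dispose of the nonvanishing criterion: applying \eqref{ann1} and then \eqref{creation1} gives
$$f_if_j^*(v_S) = (-1)^{\operatorname{count}(j,S)}\,f_i\bigl(v_{S\backslash\{j\}}\bigr) = (-1)^{\operatorname{count}(i,S\backslash\{j\})+\operatorname{count}(j,S)}\,v_T,$$
which is nonzero precisely when $j \in S$ (so $f_j^*$ does not annihilate $v_S$) and $i \notin S\backslash\{j\}$ (so $f_i$ does not annihilate $v_{S\backslash\{j\}}$). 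Since $(-1)^{a+b}=(-1)^{a-b}$, the surviving sign is $(-1)^\alpha$ with $\alpha := \operatorname{count}(i,S\backslash\{j\}) - \operatorname{count}(j,S)$, in agreement with \eqref{psi:p:q:S:eqn1}.

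Next I would set $m := \operatorname{charge}(S)$, $\lambda := \lambda(S)$, $\mu := \lambda(S\backslash\{j\})$ and $\nu := \lambda(T)$, noting that $S\backslash\{j\}$ has charge $m-1$ while $T$ has charge $m$. The crucial bookkeeping is to verify the two compatibilities $\mu = p^*_{m,j}(\lambda)$ and $\nu = p_{m-1,i}(\mu)$. For the first, write $j = s_p$; because $S$ is strictly decreasing, the elements of $S$ exceeding $s_p$ are exactly $s_1,\dots,s_{p-1}$, so $p-1 = \operatorname{count}(j,S) = \operatorname{count}_m(j,\lambda)$ by \eqref{6.2}, and a direct comparison using $\lambda_k = s_k - m + k - 1$ shows that deleting $s_p$ shifts indices precisely as prescribed in \eqref{6.2''}. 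The identity $\nu = p_{m-1,i}(\mu)$ is checked the same way, using that $i \notin S\backslash\{j\}$ places $\mu$ in $\mathscr{P}_{m-1,i}$ and that insertion is governed by \eqref{6.2'}. I expect this verification to be the main obstacle, since it is the single place where the abstract partition operators of \S\ref{5.2} must be matched against the concrete reordering of a semi-infinite monomial.

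With these identifications, $\nu = p_{m-1,i}\,p^*_{m,j}(\lambda)$ is exactly the composite appearing in Proposition \ref{prop6.1}, so its conclusions apply verbatim: when $i<j$ one obtains $\nu\subseteq\lambda$ with $\lambda\backslash\nu$ a border strip of length $j-i$, and when $i>j$ one obtains $\lambda\subseteq\nu$ with $\nu\backslash\lambda$ a border strip of length $i-j$. It remains only to match the sign. Applying \eqref{6.2} twice, namely $\operatorname{count}(j,S)=\operatorname{count}_m(j,\lambda)$ and $\operatorname{count}(i,S\backslash\{j\})=\operatorname{count}_{m-1}(i,\mu)$, I would rewrite
$$\alpha = \operatorname{count}_{m-1}(i,\mu) - \operatorname{count}_m(j,\lambda).$$
In case (a) this equals $\operatorname{height}(\lambda\backslash\nu)$ by Proposition \ref{prop6.1}(a), so $(-1)^\alpha = (-1)^{\operatorname{height}(\lambda\backslash\nu)}$; in case (b) it equals $-\operatorname{height}(\nu\backslash\lambda)$ by Proposition \ref{prop6.1}(b), and since powers of $-1$ at opposite integers agree, again $(-1)^\alpha = (-1)^{\operatorname{height}(\nu\backslash\lambda)}$. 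This yields the asserted formulas and completes the proof.
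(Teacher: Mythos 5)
Your proposal is correct and follows essentially the same route as the paper: both reduce the statement to Proposition \ref{prop6.1} via the key identification $\nu = p_{m-1,i}\,p^*_{m,j}(\lambda)$ (obtained from Proposition \ref{prop5.1}(b)) and then match the sign $(-1)^\alpha$ against the height formulas using \eqref{6.2}. The only difference is one of exposition: the paper asserts the identification tersely by pointing at \eqref{6.5} and \eqref{6.11}, whereas you verify it explicitly through the intermediate partition $\mu = \lambda(S\backslash\{j\})$, which is a faithful filling-in of the same argument.
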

\begin{proof}
The proposition is a consequence of Proposition \ref{prop5.1}, the discussion given in \S \ref{psi:p:q:S:eqn1} and  Proposition \ref{prop6.1}.  In particular, using Proposition \ref{prop5.1} (b) in conjunction with \eqref{6.5} and \eqref{6.11}, depending on whether $i<j$ or $j<i$, we compute that 
$$\nu = p_{i,m-1}p_{j,m}^*(\lambda).$$  The conclusion of Proposition \ref{prop6.2} then follows from Proposition \ref{prop6.1}, \eqref{6.2} and  \eqref{psi:p:q:S:eqn1}.
\end{proof}

\section{The bosonic representation of $\widehat{\mathbf{gl}}(\infty)$}\label{6}

We now provide an application of our combinatorial construction given in \S \ref{fermi:gl}.  Indeed, we use this construction to prove the boson-fermion correspondence which we state as Theorem \ref{bosonic:Extension:theorem}.

\np{}\label{Sec6.1}  To begin with, let $A:=\CC[z,z^{-1}]$ and 
$$B := A\otimes_\CC \Lambda =\CC[z,z^{-1},h_1,h_2,\dots].$$  The \emph{bosonic representation of the oscillator algebra} is the Lie algebra homomorphism
\begin{equation}\label{eqn6.1} \xi_0: \mathfrak{s} \rightarrow \End_\CC(B)
\end{equation}
determined by:
$$ \xi_0((0,t^k))=p_k^\perp = k \frac{\partial}{\partial p_k} \text{, for $k > 0$;}$$
$$\xi_0((0,t^k))=p_{-k} \text{, for $k<0$;} $$
$$ \xi_0((0,1))=z\frac{\partial}{\partial z} \text{;}$$
and
$$ \xi_0((1,0))=1\text{,}$$
compare with \cite[p.~ 314]{Kac:infinite:Lie:Algebras} or \cite[Lecture 5, p.~ 46]{kac-raina-rozhkovskaya}.

\np{}\label{Sec6.2}  The first step to proving Theorem \ref{bosonic:Extension:theorem} is to define operators 
$$b_i \in \operatorname{End}_{\CC}(B)$$ 
by the rule:
\begin{equation}\label{eqn6.2}
b_i(z^m s_\lambda) = 
\begin{cases}
(-1)^{\operatorname{count}_m(i,\lambda)} z^{m+1}s_{p_{m,i}(\lambda)} & \text{ for $\lambda \in \mathscr{P}_{m,i}$} \\
0 & \text{ for $\lambda \not \in \mathscr{P}_{m,i}$.}
\end{cases}
\end{equation}
Similarly define operators $$b_i^* \in \operatorname{End}_{\CC}(B)$$ by the rule
\begin{equation}\label{eqn6.3'}
b_i^*(z^m s_\lambda)
\begin{cases}
(-1)^{\operatorname{count}_m(i,\lambda)} z^{m-1}s_{p_{m,i}^*(\lambda)} & \text{ for $\lambda \in \mathscr{P}^*_{m,i}$} \\
0 & \text{ for $\lambda \in \mathscr{P}^*_{m,i}$.}
\end{cases}
\end{equation}

As in \eqref{eq4.9} and \eqref{eq4.10}, we have the relations
\begin{equation}\label{eqn6.3} \text{ $b_i b_j^* + b_j^* b_i = \delta_{ij}$, $b_i b_j + b_j b_i = 0$, $b^*_i b_j^* + b_j^* b_i^* = 0$,}
\end{equation}
 and
\begin{equation}\label{eqn6.4}
[b_i b^*_j, b_{\ell} b^*_k] = \delta_{j \ell} b_i b_k^* - \delta_{i k} b_\ell b^*_j,
\end{equation}
for all $i,j,k,\ell \in \ZZ$.  Indeed, as in \eqref{eq4.9}, \eqref{eqn6.3} follows immediately from the definitions while \eqref{eqn6.4} is deduced from \eqref{eqn6.3}.

\np{}\label{Sec6.3} {\bf Example.}  As in \S \ref{4.4}, if $\lambda=(4,4,3,3,2,2,1)$, then 
$$\operatorname{count}_0(3,\lambda)=1$$ 
and 
$$b^*_3(s_\lambda) = -z^{-1}s_\mu\text{,}$$ where $\mu$ is the partition $\mu = (5,3,3,2,2,1)$.  Also, $\operatorname{count}_{-1}(-1,\mu) = 3$, 
$$b_{-1}(z^{-1}s_{\mu})=-s_{\nu}\text{,}$$ where $\nu = (4,2,2,2,2,1)$, and 
$$b_{-1}b_3^*(s_\lambda) = s_\nu\text{.}$$

\np{}\label{Sec6.4}
The key point in the proof of Theorem \ref{bosonic:Extension:theorem} is the following observation which is a consequence of Proposition \ref{prop6.2}.
The point is that if $S \in \mathscr{S}$, $k$ a nonzero integer and $\mathfrak{s}_k := (0,t^k) \in \mathfrak{s}$, then 
\begin{equation}\label{7.1} \delta_0(\mathfrak{s}_k)(v_S) = \sum_{\mathrm{finite}} f_\ell f^*_{\ell + k}(v_S)
\end{equation}
and we now give a combinatorial description of this finite set:

\begin{proposition}\label{prop7.2}
Suppose that $S \in \mathscr{S}_m$.  The following assertions hold true:
\begin{enumerate}
\item{ If $k>0$, then
$$ \mathfrak{s}_k(v_S) = \sum_{\mathrm{finite}}  (-1)^{\operatorname{height}(\lambda(S) \setminus \lambda(T))}v_{T} \text{,}$$
where the finite sum is taken over all
$T \in \mathscr{S}_m$, which have the property that $\lambda(T)\subseteq \lambda(S)$, and $\lambda(S) \setminus \lambda(T)$ is a border strip of length $k$. }
\item{If $k<0$, then
$$\mathfrak{s}_k(v_S) = \sum_{\mathrm{finite}} (-1)^{\operatorname{height}(\lambda(T) \setminus \lambda(S))}v_{T} $$
where the finite sum is taken over all $T \in \mathscr{S}_m$ with the property that $\lambda(S) \subseteq \lambda(T)$ and $\lambda(T) \setminus \lambda(S)$ is a border strip of length $|k|$.}
\end{enumerate}
\end{proposition}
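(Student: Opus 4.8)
The plan is to reduce the statement to a termwise application of Proposition \ref{prop6.2} and then to recognize the resulting sum over fermionic indices as a sum over border strips. First I would unwind the left-hand side of \eqref{7.1}. By the embedding \eqref{oscillator algebra:mono} we have $\delta_0(\mathfrak{s}_k) = (0, \sum_{j \in \ZZ} \E_{j, j+k})$, and since $k \neq 0$ every matrix unit $\E_{j, j+k}$ is off-diagonal, so \eqref{4.8'} gives $\rho(\delta_0(\mathfrak{s}_k)) = \sum_{\ell \in \ZZ} f_\ell f_{\ell+k}^*$. Applied to a fixed $v_S$ this is the finite sum \eqref{7.1}, because $f_\ell f_{\ell+k}^*(v_S) \neq 0$ only for the finitely many $\ell$ with $\ell + k \in S$ and $\ell \notin S \backslash \{\ell+k\}$ (Proposition \ref{prop6.2}). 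Since each operator $f_\ell f_{\ell+k}^*$ preserves $F^{(m)}$ (see \S \ref{fermi:creation:ann:1}), every surviving term lies in $F^{(m)}$, so the monomials $T$ that appear all belong to $\mathscr{S}_m$.

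Next I would apply Proposition \ref{prop6.2} to each surviving term with $i = \ell$ and $j = \ell + k$. When $k > 0$ we have $i < j$, so part (a) yields $\lambda(T) \subseteq \lambda(S)$, that $\lambda(S) \backslash \lambda(T)$ is a border strip of length $j - i = k$, and the value $f_\ell f_{\ell+k}^*(v_S) = (-1)^{\operatorname{height}(\lambda(S) \backslash \lambda(T))} v_T$. When $k < 0$ we have $j < i$, so part (b) gives $\lambda(S) \subseteq \lambda(T)$, that $\lambda(T) \backslash \lambda(S)$ is a border strip of length $i - j = |k|$, together with the corresponding signed value $(-1)^{\operatorname{height}(\lambda(T) \backslash \lambda(S))} v_T$. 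This already matches the summands in (a) and (b); what remains is to check that summing over the admissible $\ell$ is the same as summing over the monomials $T$ described in the statement.

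The remaining and main point is therefore the bijection between admissible indices and target monomials. The assignment $\ell \mapsto T := (S \backslash \{\ell+k\}) \cup \{\ell\}$ is injective, since the ordered pair $(\ell + k, \ell)$ of removed and inserted integers is recovered from the symmetric difference $S \triangle T$; hence the monomials $v_T$ occurring in \eqref{7.1} are pairwise distinct and no cancellation occurs. For surjectivity I would argue that every $T \in \mathscr{S}_m$ with $\lambda(T) \subseteq \lambda(S)$ and $\lambda(S) \backslash \lambda(T)$ a border strip of length $k$ (respectively $\lambda(S) \subseteq \lambda(T)$ and $\lambda(T) \backslash \lambda(S)$ a border strip of length $|k|$) arises from a unique such $\ell$. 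This is the fermionic encoding of the fact that removing a length-$k$ border strip from a partition corresponds, under the bijection $\lambda : \mathscr{S}_m \to \mathscr{P}$ of Proposition \ref{prop5.1}(b), to moving a single particle from an occupied site down by $k$ positions onto an empty site; concretely, $T$ and $S$ then differ in exactly the two entries $\ell + k \in S$ and $\ell = (\ell+k) - k \notin S \backslash \{\ell+k\}$, which recovers the required $\ell$. I expect this surjectivity---the identification of an arbitrary border-strip complement with a single-particle hop---to be the crux; it can be read off by running the explicit descriptions of $\nu$ in the proof of Proposition \ref{prop6.1} in reverse, or from the standard beta-set characterization of border strips. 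Granting the bijection, the two reindexed sums are exactly the right-hand sides of (a) and (b), completing the proof.
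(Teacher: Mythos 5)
Your proposal is correct and takes essentially the same approach as the paper: both reduce to a termwise application of Proposition \ref{prop6.2} on the finite sum \eqref{7.1}, and then establish that every border strip of the stated shape arises from a (unique) admissible index $\ell$, i.e.\ from a single-particle hop. The only difference is explicitness at the surjectivity step: the paper constructs $\ell$ directly from the border strip data (for $k>0$ it sets $\ell = \theta_n - k - n + m + \lambda_{n+1}$ and verifies $\nu = p_{m-1,\ell}\,p^*_{m,\ell+k}(\lambda)$ from the definitions \eqref{6.2'} and \eqref{6.2''}), whereas you defer this to the standard beta-set/Maya-diagram characterization of border strips---which is precisely the content of that computation---while adding the useful remark, left implicit in the paper, that distinct $\ell$ yield distinct $T$, so no cancellation occurs in the sum.
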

\begin{proof}
To begin with, note that for both (a) and (b), Proposition \ref{prop6.2} implies that each summand of \eqref{7.1} contributes a summand of the desired form.

To establish Proposition \ref{prop7.2} it thus remains to show that, conversely, each border strip of the shape asserted in the proposition appears as a summand of \eqref{7.1}.

To this end, consider the case that $k>0$.  Let $\lambda$ be the partition corresponding to $S$, suppose that $\nu \subseteq \lambda$ is such that $\theta:= \lambda \setminus \nu$ is a border strip of length $k$.  Let $\theta_{n'}$ denote the number of elements in the $n'$th row of $\theta$.  Let $n:= \min \{ j : \nu_j \not = \lambda_j \}$.  Then $\theta_{n'} = 0$ for $n'<n$ and $\theta_n \not = 0$;  set
\begin{equation}\label{prop7.2.1}
\ell := \theta_n - k - n + m + \lambda_{n+1}.
\end{equation}
We then compute, using the definitions \eqref{6.2'} and \eqref{6.2''} together with the fact that $\theta$ is a border strip, that 
\begin{equation}\label{prop7.2.2}
\nu = p_{m-1,\ell}p^*_{m,\ell+k}(\lambda);
\end{equation}
compare with \eqref{6.6} and \eqref{6.7}.

Thus if $T$ is the element of $\mathscr{S}_m$ corresponding to $\nu$, then
\begin{equation}\label{prop7.2.3}
(-1)^{\operatorname{height}(\lambda \setminus \nu)} v_T = f_\ell f^*_{\ell+k}(v_S),
\end{equation}
by Proposition \ref{prop6.2} (a).

Next suppose that $k<0$.  Again let $\lambda$ be the partition corresponding to $S$, suppose that $\nu \supseteq \lambda$ is such that $\theta := \nu \setminus \lambda$ is a border strip of length $|k|$, let $T$ be the element of $\mathscr{S}_m$ corresponding to $\nu$ and let $n := \min \{ j : \nu_j \not = \lambda_j \}$.  Let $\theta_{n'}$ denote the number of elements in the $n'$th row of $\theta$ and set
\begin{equation}\label{prop7.2.3}
\ell :=  \theta_n - (n-1)+\lambda_n + m.
\end{equation}
We then compute, using the definitions \eqref{6.2'} and \eqref{6.2''} together with the fact that $\theta$ is a border strip, that:
\begin{equation}\label{prop7.2.4}
\nu = p_{m-1,\ell} p_{m,\ell+k}^*(\lambda);
\end{equation} compare with 
\eqref{6.14} and \eqref{6.14'}.  

Thus if $T$ is the element of $\mathscr{S}_m$ corresponding to $\nu$, then
\begin{equation}\label{prop7.2.4}
(-1)^{\operatorname{height}(\nu \setminus \lambda)} v_T = f_\ell f_{\ell+k}^*(v_S),
\end{equation}
by Proposition \ref{prop6.2} (b).
\end{proof}

\subsection{}\label{Sec6.5} 
Using the theory we have developed thus far we can prove the boson-fermion correspondence.

\begin{theorem}\label{bosonic:Extension:theorem}
The bosonic representation 
$$ \xi_0  : \mathfrak{s} \rightarrow \End_\CC(B),$$
namely \eqref{eqn6.1},
of the oscillator algebra extends to a representation 
$$\xi:\widehat{\mathbf{gl}}(\infty) \rightarrow \End_\CC(B)$$
of the Lie algebra $\widehat{\mathbf{gl}}(\infty)$. 
More precisely, the Lie algebra $\widehat{\mathbf{gl}}(\infty)$ admits a representation
$ \xi:\widehat{\mathbf{gl}}(\infty) \rightarrow \End_\CC(B)$
with the property that the diagram
$$
\begin{tikzcd}
\mathfrak{s} \arrow{dr}{\xi_0} \arrow{d}[swap]{\delta_0} & \\
\widehat{\mathbf{gl}}(\infty) \arrow{r}{\xi}& \End_\CC(B)
\end{tikzcd}
$$
commutes.
In addition, the $\CC$-linear isomorphism
$$\sigma : F\rightarrow B$$
\text{ defined by } 
$$v_S \mapsto z^m s_{\lambda(S)}, $$
for $m = \operatorname{charge}(S)$ and $\lambda(S)$ the partition determined by the semi-infinite monomial $S$,
is an isomorphism of 
$\widehat{\mathbf{gl}}(\infty)$-modules.  
\end{theorem}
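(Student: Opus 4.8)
The plan is to transport the infinite wedge representation $\rho$ of \S\ref{4.8} across the isomorphism $\sigma$ and to recognize the resulting action of the oscillator subalgebra as the Murnaghan--Nakayama rule. First I would \emph{define} the desired representation $\xi$ by the very formulas that define $\rho$, with the fermionic operators $f_if_j^*$ replaced by the bosonic operators $b_ib_j^*$ of \S\ref{Sec6.2}: namely $\xi((0,\E_{ij})) = b_ib_j^*$ when $i\neq j$ or $i=j>0$, $\xi((0,\E_{ii})) = b_ib_i^* - \operatorname{id}_B$ when $i\leq 0$, and $\xi((a,0)) = a\operatorname{id}_B$. Since the $b_i,b_j^*$ satisfy exactly the relations \eqref{eqn6.3} and \eqref{eqn6.4} that $f_i,f_j^*$ satisfy in \eqref{eq4.9} and \eqref{eq4.10}, and since the $2$-cocycle $\operatorname{c}(\cdot,\cdot)$ is the same, the verification that these formulas determine a Lie algebra homomorphism is word-for-word the argument sketched in \S\ref{4.8}; in particular the correction term $-\operatorname{id}_B$ for $i=j\leq 0$ is precisely what reproduces the values of $\operatorname{c}$ recorded in \eqref{eqn2.6}.

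The heart of the argument is to show that $\sigma$ intertwines $\rho$ and $\xi$, i.e. $\sigma\circ\rho(X) = \xi(X)\circ\sigma$ for all $X\in\widehat{\mathbf{gl}}(\infty)$. Because both representations are assembled from the same formulas in the $\E_{ij}$, it suffices to check $\sigma\circ(f_if_j^*) = (b_ib_j^*)\circ\sigma$ for all $i,j\in\ZZ$. Fix $S\in\mathscr{S}_m$, write $\lambda = \lambda(S)$, and suppose $f_if_j^*(v_S)\neq 0$; by Proposition \ref{prop6.2} this happens exactly when $\lambda\in\mathscr{P}_{m,j}^*$ and $\mu := p_{m,j}^*(\lambda)\in\mathscr{P}_{m-1,i}$, which by the bijection of Proposition \ref{prop5.1}(b) is also the precise condition for $b_ib_j^*(z^m s_\lambda)\neq 0$. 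Applying the definitions \eqref{6.2'}, \eqref{6.2''} gives $b_ib_j^*(z^m s_\lambda) = (-1)^{\alpha+\beta}z^m s_\nu$, where $\alpha := \operatorname{count}_m(j,\lambda)$, $\beta := \operatorname{count}_{m-1}(i,\mu)$, and $\nu := p_{m-1,i}p_{m,j}^*(\lambda)$. On the fermionic side Proposition \ref{prop6.2} gives $f_if_j^*(v_S) = (-1)^{\operatorname{height}(\theta)}v_T$ with $\lambda(T)=\nu$ and $\theta$ the border strip $\lambda\backslash\nu$ (if $i<j$) or $\nu\backslash\lambda$ (if $i>j$), while Proposition \ref{prop6.1} evaluates $\operatorname{height}(\theta)$ as $\beta-\alpha$ or $\alpha-\beta$ respectively. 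Since $(-1)^{\beta-\alpha}=(-1)^{\alpha-\beta}=(-1)^{\alpha+\beta}$, we obtain $\sigma(f_if_j^*(v_S)) = (-1)^{\operatorname{height}(\theta)}z^m s_\nu = (-1)^{\alpha+\beta}z^m s_\nu = b_ib_j^*(\sigma(v_S))$; the diagonal case $i=j$, together with its $-\operatorname{id}$ correction when $i\leq 0$, is the trivial subcase $\nu=\lambda$ of Proposition \ref{prop6.1}(c). Hence $\sigma\circ\rho(X)=\xi(X)\circ\sigma$ on generators, and therefore on all of $\widehat{\mathbf{gl}}(\infty)$, so $\sigma$ is an isomorphism of $\widehat{\mathbf{gl}}(\infty)$-modules.

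It then remains to verify that the diagram commutes, i.e. $\xi\circ\delta_0 = \xi_0$. Using the intertwining just established with $X=\delta_0(\mathfrak{s}_k)$ reduces this to computing the fermionic oscillator action and transporting it through $\sigma$. For $k\neq 0$, Proposition \ref{prop7.2} expresses $\rho(\delta_0(\mathfrak{s}_k))(v_S)$ as a signed sum over border strips of length $|k|$ removed from (if $k>0$) or added to (if $k<0$) $\lambda(S)$; applying $\sigma$ turns this into $z^m$ times a signed sum of Schur functions that is \emph{exactly} the right-hand side of the adjoint Murnaghan--Nakayama rule \eqref{Murnaghan:Nakayama:adjoint} for $p_k^\perp$ (when $k>0$), respectively of the Murnaghan--Nakayama rule \eqref{Murnaghan:Nakayama} for $p_{-k}$ (when $k<0$). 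As neither $p_k^\perp$ nor $p_{-k}$ involves $z$, this equals $\xi_0((0,t^k))(\sigma v_S)$. For $k=0$ one computes directly that $\rho(\delta_0((0,1)))$ acts on $F^{(m)}$ as multiplication by $m=\operatorname{charge}(S)$, via Proposition \ref{prop5.1}(a), and $\sigma$ carries this to $z\,\partial/\partial z = \xi_0((0,1))$; the central element acts as $\operatorname{id}_B$ on both sides. Since $\sigma$ is surjective, these operator identities on the basis $\{\sigma v_S\}$ give $\xi(\delta_0(\mathfrak{s}_k)) = \xi_0(\mathfrak{s}_k)$ for every $k$, and by $\CC$-linearity $\xi\circ\delta_0=\xi_0$.

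I expect the main technical obstacle to be the second step: reconciling the two \emph{a priori} different sign conventions, the fermionic $(-1)^{\operatorname{count}}$ bookkeeping of \eqref{creation1}, \eqref{ann1} against the $(-1)^{\operatorname{height}}$ that surfaces through $\sigma$, which is precisely the content of Propositions \ref{prop6.1} and \ref{prop6.2} and the identity $(-1)^{\beta-\alpha}=(-1)^{\alpha+\beta}$. The conceptual crux, by contrast, is already isolated in Proposition \ref{prop7.2}: the combinatorics of the fermionic oscillator action \emph{is} the Murnaghan--Nakayama rule, and this is exactly what lets the proof bypass vertex operators. The one point demanding care beyond routine bookkeeping is the $k=0$ charge computation, where the formally infinite diagonal sum $\sum_j b_jb_j^*$ must be regularized by the $-\operatorname{id}_B$ terms so as to match the charge operator on $F$.
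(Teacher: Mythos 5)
Your proposal is correct and follows essentially the same route as the paper: define $\xi$ by substituting the bosonic operators $b_ib_j^*$ into the formulas \eqref{4.8'}, \eqref{4.8''} for $\rho$, verify the representation property via the relations \eqref{eqn6.3}, \eqref{eqn6.4}, establish the intertwining $\sigma\circ f_if_j^* = b_ib_j^*\circ\sigma$ from Propositions \ref{prop5.1}, \ref{prop6.1}, \ref{prop6.2}, and obtain the compatibility with $\xi_0$ from Proposition \ref{prop7.2} together with the Murnaghan--Nakayama rules \eqref{Murnaghan:Nakayama}, \eqref{Murnaghan:Nakayama:adjoint}. The only differences are organizational (you prove the module isomorphism first and deduce the commuting diagram from it, where the paper does the reverse) and your explicit treatment of the $k=0$ charge operator via Proposition \ref{prop5.1}(a), a case the paper's proof leaves implicit.
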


\begin{proof}
Consider the representation 
$$\xi :\widehat{\mathbf{gl}}(\infty) \rightarrow \End_\CC(B)$$ 
determined by the conditions that:
$$\xi((0,\E_{ij})) = \begin{cases} b_ib_j^* & \text{ if $i\not = j$ or $i=j>0$} \\
b_ib_i^* - \operatorname{id}_B & \text{ if $j=i \leq 0$}
\end{cases} $$
and 
$$\xi((a,0))=a \operatorname{id}_B,$$ for $a \in \CC$.  The fact that $\xi$ is a representation follows from the relations given in \eqref{eqn6.3}. The fact that $\xi$ extends the representation $\xi_0$ follows from Proposition \ref{prop7.2} and the Murnaghan-Nakayama rule \eqref{Murnaghan:Nakayama} and \eqref{Murnaghan:Nakayama:adjoint}.

For the second assertion, fix $i,j\in \ZZ$ and assume that $f_if_j^*(v_S) \not = 0$.  We then have that $$f_if_j^*(v_S)=(-1)^\alpha v_T,$$
where 
$$T := (S \setminus \{j\})\cup \{i\},$$ and 
$$\alpha := \operatorname{count}(i,S\setminus \{j\}) - \operatorname{count}(j,S);$$  
let $\lambda = \lambda(S)$ be the partition corresponding to $S$ and let $\nu = \lambda(T)$ be the partition corresponding to $T$.

In this setting, the operator $p_{m,j}^*$ is defined on the partition $\lambda$ and the operator $p_{m-1,i}$ is defined on the partition $p_{m,j}^*(\lambda)$.  In addition 
$$\nu = p_{m-1,i}p_{m,j}^*(\lambda).$$  
On the other hand we have 
that 
$$\sigma(v_S)=z^m s_{\lambda}.$$  
Considering the definitions of the operators $b_i$ and $b_j^*$, we then deduce that 
$$b_i b_j^*(z^m s_\lambda)=(-1)^\alpha z^m s_\nu$$ which is what we wanted to show.
\end{proof}
 
\providecommand{\bysame}{\leavevmode\hbox to3em{\hrulefill}\thinspace}
\providecommand{\MR}{\relax\ifhmode\unskip\space\fi MR }
\providecommand{\MRhref}[2]{%
  \href{http://www.ams.org/mathscinet-getitem?mr=#1}{#2}
}
\providecommand{\href}[2]{#2}

\end{document}